\newcommand{\nc}{\newcommand}
\newenvironment{proof}{{\noindent \textbf{Proof}\,\,}}{\hspace*{\fill}$\Box$\medskip}
\newtheorem{theorem}[subsection]{Theorem}
\newtheorem{proposition}[subsection]{Proposition}
\newtheorem{lemma}[subsection]{Lemma}
\newtheorem{conjecture}[subsection]{Conjecture}
\theoremstyle{definition}
\theoremstyle{remark}
\newtheorem{remark}[subsection]{Remark}
\nc{\fa}{{\mathfrak{a}}}
\nc{\fb}{{\mathfrak{b}}}
\nc{\fg}{{\mathfrak{g}}}
\nc{\fh}{{\mathfrak{h}}}
\nc{\fj}{{\mathfrak{j}}}
\nc{\fn}{{\mathfrak{n}}}
\nc{\fm}{{\mathfrak{m}}}
\nc{\fu}{{\mathfrak{u}}}
\nc{\fp}{{\mathfrak{p}}}
\nc{\fr}{{\mathfrak{r}}}
\nc{\ft}{{\mathfrak{t}}}
\nc{\fsl}{{\mathfrak{sl}}}
\nc{\fgl}{{\mathfrak{gl}}}
\nc{\hsl}{{\widehat{\mathfrak{sl}}}}
\nc{\hgl}{{\widehat{\mathfrak{gl}}}}
\nc{\hg}{{\widehat{\mathfrak{g}}}}
\nc{\chg}{{\widehat{\mathfrak{g}}}{}^\vee}
\nc{\hn}{{\widehat{\mathfrak{n}}}}
\nc{\chn}{{\widehat{\mathfrak{n}}}{}^\vee}
\nc{\Mod}{{\textrm{Mod}}}
\nc{\wGL}{{\widehat{GL}^+}}
\nc{\BA}{{\mathbb{A}}}
\nc{\BC}{{\mathbb{C}}}
\nc{\BM}{{\mathbb{M}}}
\nc{\BN}{{\mathbb{N}}}
\nc{\BF}{{\mathbb{F}}}
\nc{\BH}{{\mathbb{H}}}
\nc{\BP}{{\mathbb{P}}}
\nc{\BR}{{\mathbb{R}}}
\nc{\BZ}{{\mathbb{Z}}}
\nc{\kk}{{\mathbb{K}}}
\nc{\kko}{{\mathbb{K}}}
\nc{\coh}{{\text{Coh}}}
\nc{\CA}{{\mathcal{A}}}
\nc{\CC}{{\mathcal{C}}}
\nc{\CB}{{\mathcal{B}}}
\nc{\DD}{{\mathcal{D}}}
\nc{\CE}{{\mathcal{E}}}
\nc{\CF}{{\mathcal{F}}}
\nc{\tCF}{{\widetilde{\CF}}}
\nc{\tCT}{{\widetilde{\CT}}}
\nc{\oCF}{{\overline{\CF}}}
\nc{\CG}{{\mathcal{G}}}
\nc{\CL}{{\mathcal{L}}}
\nc{\CK}{{\mathcal{K}}}
\nc{\CI}{{\mathcal{I}}}
\nc{\CM}{{\mathcal{M}}}
\nc{\CH}{{\mathcal{H}}}
\nc{\CN}{{\mathcal{N}}}
\nc{\CO}{{\mathcal{O}}}
\nc{\CP}{{\mathcal{P}}}
\nc{\CR}{{\mathcal{R}}}
\nc{\CQ}{{\mathcal{Q}}}
\nc{\CS}{{\mathcal{S}}}
\nc{\CT}{{\mathcal{T}}}
\nc{\CU}{{\mathcal{U}}}
\nc{\CV}{{\mathcal{V}}}
\nc{\CW}{{\mathcal{W}}}
\nc{\tpsi}{{\widetilde{\Psi}}}
\nc{\Ker}{{\text{Ker }}}
\nc{\CX}{{\mathcal{X}}}
\nc{\tCX}{{\widetilde{\mathcal{X}}}}
\nc{\CY}{{\mathcal{Y}}}
\nc{\tCY}{{\widetilde{\mathcal{Y}}}}
\nc{\tN}{{\widetilde{\CN}}}
\nc{\pN}{{\BP\widetilde{\CN}}}
\nc{\tT}{{T}}
\nc{\fC}{{\mathfrak{C}}}
\nc{\fZ}{{\mathfrak{Z}}}
\nc{\fU}{{\mathfrak{U}}}
\nc{\fV}{{\mathfrak{V}}}
\nc{\fS}{{\mathfrak{S}}}
\nc{\od}{{\overline{d}}}
\nc{\rg}{{\textrm{R}\Gamma}}
\nc{\erg}{{\emph{R}\Gamma}}
\nc{\id}{{\textrm{Id}}}
\nc{\rhom}{{\textrm{RHom}}}
\def\ph{\varphi}
\def\Ext{\textrm{Ext}}
\def\Hom{\textrm{Hom}}
\def\e{\varepsilon}
\def\pol{\textrm{Poles}}
\def\mat{\textrm{Mat}}
\def\and{\textrm{ }\&\textrm{ }}
\def\sym{\textrm{Sym}}
\def\im{\textrm{Im }}
\def\tCF{\widetilde{\CF}}
\def\tCW{\widetilde{\CW}}
\def\tCW{\widetilde{\CW}}
\def\L{\Lambda}
\def\slz{\textrm{SL}_2(\BZ)}
\def\loccit{\emph{loc. cit. }}
\def\syt{\textrm{SYT}}
\def\sytx{\textrm{SYTx}}
\begin{document}

\title[Moduli of flags of sheaves and their $K-$theory]{\large{\textbf{Moduli of flags of sheaves and their $K-$theory}}}

\author[Andrei Negut]{Andrei Negut}
\address{Columbia University, Department of Mathematics, New York, NY, USA}
\address{Simion Stoilow Institute of Mathematics, Bucharest, Romania}
\email{andrei.negut@@gmail.com}

\maketitle

\begin{abstract}

We introduce moduli spaces of flags of sheaves on $\BP^2$, and use them to obtain functors between the derived categories of the usual moduli spaces of sheaves on $\BP^2$. These functors induce an action of the shuffle algebra on $K-$theory, which we reinterpret in terms of tautological classes. In particular, this action provides a $K-$theoretic version of Baranovsky's operators from \cite{Ba}.


\end{abstract}

\section{Introduction}

Through the work of Grojnowski and Nakajima (\cite{G} and \cite{Na}, for $r=1$) and Baranovsky (\cite{Ba}, for general $r$), there is an action of the Heisenberg Lie algebra $\hgl_1$ on the equivariant cohomology group $H$ of the moduli space $\CM$ of rank $r$ sheaves on a surface. This action is given by the correspondences:
\begin{equation}
\label{eqn:bgn}
\{(\CF,\CF') \text{ such that } \CF \supset \CF'\} \subset \CM \times \CM
\end{equation}
Later, Feigin-Tsymbaliuk (\cite{FT}) and Schiffmann-Vasserot (\cite{SV}) introduced an action of a certain larger algebra $\CA$ on the equivariant $K-$theory group of $\CM$. This algebra is known by many names: \\

\begin{itemize}

\item the double shuffle algebra, \\

\item the Hall algebra of an elliptic curve, \\

\item the doubly-deformed $W_{1+\infty}-$algebra, \\

\item the spherical part of Cherednik's DAHA, \\

\item $U_q(\widehat{\widehat{\mathfrak{gl}}}_1)$. 

\end{itemize}

\textbf{} \\
We will mostly work with the presentation of $\CA$ as a double shuffle algebra. As shown in \cite{Shuf}, the standard generators $u_{k,d}$ of the elliptic Hall algebra correspond to shuffle elements $P_{k,d} \in \CA$ which are given by explicit formulas. When $d=0$, these elements give rise to an action of the $q-$Heisenberg algebra $U_q(\hgl_1)$ on $K$ that deforms the construction of Baranovsky-Grojnowski-Nakajima. 

\textbf{} \\
In this paper, we will interpret these shuffle elements geometrically. The correspondences \eqref{eqn:bgn} will not be suitable for our purposes because they are too singular and contain too little geometric information about $K-$theory. Instead, we will work with the moduli spaces of flags of sheaves $\fZ^k$, whose points are chains of sheaves:
\begin{equation}
\label{eqn:mich}
(\CF_0 \supset ... \supset \CF_k)
\end{equation}
such that the successive quotients are all skyscraper sheaves supported at the same point. As we will see in Section \ref{sec:fine}, these varieties define functors between the derived categories of coherent sheaves of $\CM$. Let us emphasize that these functors are not simply compositions of $k$ usual Nakajima correspondences (see Remarks \ref{rem:duh0} and \ref{rem:duh}). We hope that these functors may be described by something like a categorical shuffle algebra, but we do not know how to define such a categorification. 

\textbf{} \\
The situation is much more clear at the level of $K-$theory, where the linear maps defined by the correspondences $\fZ^k$ are shown to be related to the shuffle algebra in Theorem \ref{thm:main}. In particular, the elements $P_{k,d} \in \CA$ act by some tautological bundles on the correspondence $\fZ^k$. Our formulas allow us to explicitly compute the matrix coefficients of $P_{k,d}$ in the basis of torus fixed points. Certain special cases of these coefficients are interpreted as refined knot invariants in \cite{GN}, where many combinatorial and representation theoretic consequences are explored.

\textbf{} \\
On a deeper level, the moduli spaces $\fZ^k$ of flags of sheaves \eqref{eqn:mich} appear in a conjecture of Bezrukavnikov and Okounkov concerning filtrations on the category of coherent sheaves on the Hilbert scheme. As a special case of this conjecture, they are expected to be related to certain modules of the rational Cherednik algebra (see \cite{GN}). We intend to develop the structure of $\fZ^k$ in more detail in subsequent papers. Let us say a few words about the structure of the present paper: \\

\begin{itemize}

\item In Section \ref{sec:geom} we present the well-known moduli space of sheaves on $\BP^2$ \\

\item In Section \ref{sec:fine} we introduce the moduli space of flags of sheaves on $\BP^2$, and use it to construct correspondences on the usual moduli of sheaves \\

\item In Section \ref{sec:shuf} we present the shuffle algebra $\CA$ and its action on $K$. We show that the generators $P_{k,d} \in \CA$ act via the above correspondences $\fZ^k$ \\

\item In Section \ref{sec:fixed} we compute the coefficients of the operators $P_{k,d}$ in the basis of torus fixed points \\

\item In Section \ref{sec:extra} we present the well-known $\Ext$ bundle $E$ and the Lagrangian correspondences $\fV^k$, and compute the linear maps they induce on $K$ 


\end{itemize}

\textbf{} \\
I would foremost like to thank my advisor Andrei Okounkov for his help and advice during this project. In particular, the crucial definition of the moduli of flags of sheaves of Section \ref{sec:fine} was the product of many discussions with him on this subject. I would also like to thank Boris Feigin and Eric Vasserot for patiently explaining their viewpoints on this subject. Many thanks are due to Roman Bezrukavnikov, Eugene Gorsky and Alexander Tsymbaliuk for their suggestions on the present text, and to Davesh Maulik for carefully editing a previous version of the title of this paper. \\

\section{The Moduli Space of Sheaves on $\BP^2$}
\label{sec:geom}

\subsection{}
\label{sub:sheaf}

Consider the projective plane, and fix a line $\infty \subset \BP^2$. Fix a number $r \in \BN$, and let $\CM = \CM(r)$ denote the moduli space of rank $r$ torsion free sheaves $\CF$ on $\BP^2$, together with an isomorphism (framing): 
$$
\CF|_\infty \cong \CO_\infty^{\oplus r}.
$$ 
This latter condition forces $c_1(\CF)=0$, but $c_2(\CF)$ is still free to range over the non-positive integers. For $d \geq 0$, we denote by $\CM_d \subset \CM$ the connected component of rank $r$ sheaves of second Chern class $-d \cdot [\text{pt}]$. Its tangent spaces are given by:
\begin{equation}
\label{eqn:ks}
T_{\CF} \CM_d = \textrm{Ext}^1(\CF, \CF(-\infty))
\end{equation}
by the Kodaira-Spencer isomorphism. Using this, one can easily prove that $\CM_d$ is smooth of dimension $2rd$. We have a universal sheaf $\CS$ on $\CM_d \times \BP^2$, and pushing it forward under the standard projection gives us the \textbf{tautological vector bundle}:
\begin{equation}
\label{eqn:taut}
\CT = R^1\textrm{pr}_{1*} (\CS (-\infty))
\end{equation}
on $\CM_d$. The above twist is by the pull-back of the divisor $\infty \subset \BP^2$, and it forces $R^0$ and $R^2$ to vanish. Therefore, $\CT$ is a vector bundle, and a standard application of the Riemann-Roch Theorem shows that it has rank $d$. \\

\subsection{}
\label{sub:adhm}

Consider the vector space:
$$
\textrm{End}(\BC^d) \oplus \textrm{End}(\BC^d) \oplus \textrm{Hom}(\BC^r,\BC^d)\oplus \textrm{Hom}(\BC^d,\BC^r) \stackrel{\mu}\longrightarrow \textrm{End}(\BC^d)
$$

\begin{equation}
\label{eqn:mom}
\mu(X,Y,A,B) = [X,Y]+AB
\end{equation}
The group $GL_d$ acts on this vector space by conjugating $X$ and $Y$, left multiplying $A$ and right multiplying $B$. The well-known ADHM description presents the moduli of sheaves as:
$$
\CM_d = \mu^{-1}(0)^s/GL_d
$$
where the superscript $s$ means "semistable points" and refers to the open set of points $(X,Y,A,B)\in \mu^{-1}(0)$ such that $\BC^d$ is generated by $X$ and $Y$ acting on $\im A$. A dimension count shows that:
$$
\dim(\CM_d) = d^2 + d^2 + dr + dr - d^2 - d^2 = 2dr
$$
where the first four terms come from the degrees of freedom in $X,Y,A,B$, while the last two terms come from the condition $\mu(X,Y,A,B)=0$ and from gauge transformations in $GL_d$. Note that the tautological vector bundle $\CT$ of \eqref{eqn:taut} simply has fibers $\BC^{d}$ in the ADHM description, but it is non-trivial on the whole of $\CM_d$ because we are taking the quotient by $GL_d$. \\

\subsection{}
\label{sub:toract}

The sheaf picture and the ADHM picture of $\CM_d$ are equivalent, and we refer the reader to \cite{Na} for the details. We will refer to a point of $\CM_d$ either as a sheaf $\CF$ or as a quadruple $(X,Y,A,B)$. We will henceforth fix coordinates $[x:y:z]$ on $\BP^2$, with respect to which $\infty$ is $\{z=0\}$. Then the 2-dimensional torus $\BC^* \times \BC^*$ acts on $\CM_d$: \\

\begin{itemize}

\item on sheaves $\CF$ by:
$$
(q_1,q_2)\cdot \CF = \phi^{q_1,q_2}_*\CF, \qquad \textrm{where } \ \phi^{q_1,q_2}(x,y) = (q_1^{-1}x,q_2^{-1}y)
$$

\item on quadruples $(X,Y,A,B)$ by:
$$
(q_1,q_2)\cdot (X,Y,A,B) = (q_1 X, q_2 Y, A, q_1q_2  B)
$$
\text{}

\end{itemize}

\begin{flushleft} and the group $GL_r$ acts on $\CM_d$: \\ \end{flushleft}

\text{}

\begin{itemize}

\item on sheaves $\CF$ by:
$$
g\cdot (\CF , \CF|_\infty \cong \CO_\infty^{\oplus r}) \longrightarrow (\CF , \CF|_\infty \cong \CO_\infty^{\oplus r} \stackrel{g^{-1}}\cong \CO_\infty^{\oplus r})
$$

\item on quadruples $(X,Y,A,B)$ by:
$$
g\cdot (X,Y,A,B) = (X,Y,Ag^{-1},gB)
$$
\textrm{}

\end{itemize}

\subsection{}
\label{sub:universal}

We consider the maximal torus $T  \subset GL_r \times \BC^* \times \BC^*$ acting on $\CM_d$ as in the previous section. We will study the $T-$equivariant derived categories of coherent sheaves:
$$
\CC_d = D^b_T(\coh(\CM_d))
$$
The tautological vector bundle $\CT$, in degree 0, is our first example of an object in these derived categories. Another interesting example is the complex:
\begin{equation}
\label{eqn:complex}
\CW = [q \CT  \stackrel{\Phi}\longrightarrow q_2\CT \oplus q_1\CT \oplus \CO^{\oplus r}  \stackrel{\Psi}\longrightarrow \CT]
\end{equation}
in degrees $-1,0,1$, where $q_1$ and $q_2$ are the elementary characters of $\BC^* \times \BC^*$ (interpreted as trivial line bundles with a non-trivial torus action) and $q=q_1q_2$. The maps in the above complex are given by:
\begin{equation}
\label{eqn:maps}
v \stackrel{\Phi}\longrightarrow (Xv,Yv,Bv), \qquad \qquad (v_1,v_2,w) \stackrel{\Psi}\longrightarrow Xv_2 - Y v_1 + A w
\end{equation}
The semistability condition forces $\Psi$ to be surjective, hence $\text{Ker }\Psi$ is a vector bundle and thus $\CW$ is quasi-isomorphic to a two-step complex of vector bundles. We will call $\CW$ the \textbf{universal complex}, and we do so because it is a resolution of the restriction of the universal sheaf $\CS$ to $\CM_d \times [0:0:1] \cong \CM_d$. \\ 





\subsection{} We will also consider the Grothendieck groups of the categories $\CC_d$, namely the equivariant $K-$theory groups:
$$
K_d = K^*_T(\CM_d)
$$
which are all modules over $\kk = \BC[t_1^{\pm 1},...,t_r^{\pm 1},q_1^{\pm 1},q_2^{\pm 1}]$. Here, $t_i$ are equivariant parameters of the maximal torus of $GL_r$, while $q_1$ and $q_2$ are equivariant parameters in the factors of $\BC^* \times \BC^*$. It will be convenient to work with all these spaces together:
$$
K = \bigoplus_{d \geq 0} K_d
$$
We have the following equality of classes for the complex of \eqref{eqn:complex}:
\begin{equation}
\label{eqn:gamma}
[\CW] = \sum_{k=1}^r t_k^{-1} - (1-q_1)(1-q_2) \cdot [\CT]
\end{equation}
For any vector bundle $\CV$ on $\CM$, we define its $\Lambda$ class as:
$$
\Lambda(\CV,u) = \sum_{i=0}^{\text{rk } \CV} (-u)^i [\Lambda^i \CV^\vee] \in K[u]
$$
We can extend this notion multiplicatively to any complex on $\CM$, in particular to $\CW$ of \eqref{eqn:complex}. The $\Lambda$ classes of the tautological bundle are particularly important because they generate the entire $K-$theory group. The following proposition will be proved in Subsection \ref{sub:gen}: \\

\begin{proposition}
\label{prop:gen}

For each $d\geq 0$, the vector space:
$$
K_d \otimes_{\kko} \emph{Frac}(\kko)
$$ 
is generated by products of tautological classes. \\

\end{proposition}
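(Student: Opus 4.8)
The plan is to combine the localization theorem in equivariant $K$-theory with the explicit combinatorics of the $T$-fixed points of $\CM_d$. Recall from the ADHM picture of Subsection~\ref{sub:adhm} (see \cite{Na}) that the fixed locus $\CM_d^{T}$ consists of finitely many reduced points, indexed by $r$-tuples of partitions $\bar\lambda=(\lambda^{(1)},\dots,\lambda^{(r)})$ with $\sum_k|\lambda^{(k)}|=d$; to the box $\square$ sitting at position $(a,b)$ of $\lambda^{(k)}$ one attaches the character $\chi_\square=t_k q_1^{a}q_2^{b}$ (the precise normalization, fixed by the twist in \eqref{eqn:taut}, is irrelevant below), and the fibre of $\CT$ at the fixed point $\bar\lambda$ is the $T$-module $\bigoplus_{\square\in\bar\lambda}\chi_\square$. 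Since $\CM_d$ is smooth and quasi-projective and $\CM_d^{T}$ is finite, Thomason's localization theorem shows that the restriction map
$$
K_d\otimes_{\kko}\textrm{Frac}(\kko)\ \longrightarrow\ \bigoplus_{\bar\lambda}\textrm{Frac}(\kko)\cdot[\bar\lambda]
$$
that sends a class to the collection of its restrictions to the fixed points is an isomorphism of $\textrm{Frac}(\kko)$-algebras. It therefore suffices to prove that the $\textrm{Frac}(\kko)$-subalgebra $\CB\subseteq\bigoplus_{\bar\lambda}\textrm{Frac}(\kko)$ generated by the restrictions of the tautological classes is the whole ring, since the span of products of tautological classes maps precisely onto $\CB$.

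Consider, for $i\ge 0$, the exterior power $\Lambda^i\CT$, which is again a tautological bundle; its $K$-theory class restricts at the fixed point $\bar\lambda$ to the class of $\Lambda^i\big(\bigoplus_{\square\in\bar\lambda}\chi_\square\big)$ in $K_T(\textrm{pt})$, namely to the elementary symmetric polynomial $e_i(M_{\bar\lambda})$ in the characters, where $M_{\bar\lambda}:=\{\chi_\square\}_{\square\in\bar\lambda}$. Thus $\CB$ contains, for each $i=0,1,\dots,d$, the vector $\big(e_i(M_{\bar\lambda})\big)_{\bar\lambda}$; in particular it contains the unit $(1,\dots,1)=\big(e_0(M_{\bar\lambda})\big)_{\bar\lambda}$.

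It remains to check that $\CB$ separates the fixed points. Over $\textrm{Frac}(\kko)$ — that is, treating $t_1,\dots,t_r,q_1,q_2$ as algebraically independent — the multiset $M_{\bar\lambda}$ determines $\bar\lambda$: the sub-multiset of those $\chi_\square\in M_{\bar\lambda}$ divisible by $t_k$ equals $t_k\cdot\{q_1^{a}q_2^{b}:(a,b)\in\lambda^{(k)}\}$, and the monomials attached to the boxes of a single Young diagram are pairwise distinct and plainly determine that diagram. Hence if $\bar\lambda\ne\bar\mu$ then $M_{\bar\lambda}\ne M_{\bar\mu}$ as multisets, so the two monic polynomials $\prod_{x\in M_{\bar\lambda}}(T-x)$ and $\prod_{y\in M_{\bar\mu}}(T-y)$ of degree $d$ differ, i.e.\ $e_i(M_{\bar\lambda})\ne e_i(M_{\bar\mu})$ for some $i\in\{1,\dots,d\}$; by the previous paragraph this discrepancy is realized by an element of $\CB$. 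So $\CB$ is a unital subalgebra of the finite product of fields $\prod_{\bar\lambda}\textrm{Frac}(\kko)$ that separates the factors. Given a fixed point $\bar\nu$ and any $\bar\mu\ne\bar\nu$, pick $f\in\CB$ with $f_{\bar\nu}\ne f_{\bar\mu}$ and replace it by $(f_{\bar\nu}-f_{\bar\mu})^{-1}\big(f-f_{\bar\mu}\cdot 1\big)\in\CB$ — legitimate because $f_{\bar\nu}-f_{\bar\mu}$ is a nonzero element of the field $\textrm{Frac}(\kko)$ — obtaining a class equal to $1$ at $\bar\nu$ and to $0$ at $\bar\mu$; the product of all these, over $\bar\mu\ne\bar\nu$, is the idempotent $e_{\bar\nu}=(\delta_{\bar\nu,\bar\rho})_{\bar\rho}\in\CB$. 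Therefore $\CB\supseteq\bigoplus_{\bar\nu}\textrm{Frac}(\kko)\,e_{\bar\nu}=\bigoplus_{\bar\nu}\textrm{Frac}(\kko)$, and transporting this back through the localization isomorphism proves the Proposition.

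The crux — the step that I expect to demand the most care — is the separation claim: it rests on pinning down the $T$-weights of $\CT$ at every fixed point and on verifying that distinct fixed points never give the same multiset of weights. This is exactly where the rank $r$ and the $GL_r$-equivariance enter, the parameters $t_1,\dots,t_r$ peeling apart the $r$ constituent partitions, and it is also the reason for passing to $\textrm{Frac}(\kko)$: only there are the characters $\chi_\square$ guaranteed to be pairwise distinct and invertible, instead of possibly colliding or vanishing for special values of the equivariant parameters.
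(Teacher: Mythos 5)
Your proof is correct and follows essentially the same route as the paper's: equivariant localization over $\textrm{Frac}(\kko)$ reduces the claim to showing that tautological classes span the functions on the finite fixed-point set, and both arguments deduce this from the fact that the restrictions of $\CT$ distinguish the fixed points. The only (cosmetic) difference is that the paper separates points using powers of the single class $[\CT]$ together with a Vandermonde determinant, whereas you use the exterior powers $\Lambda^i\CT$ and build idempotents in the product of fields.
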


\section{The Moduli Space of Flags of Sheaves on $\BP^2$}
\label{sec:fine}

\subsection{} We have so far studied the moduli spaces $\CM_d$ separately. But we are interested in studying the relations between them, and this starts with the so-called \textbf{simple correspondences}:
$$
\fZ_{d,d+1} = \{\CF \supset_{p} \CF'\} \quad \subset \quad \CM_d \times \CM_{d+1}
$$
where $p = [0:0:1] \in \BP^2$. Here, the notation $\supset_p$ means that the quotient of the two sheaves is a skyscraper sheaf supported at $p$. These correspondences are known to be smooth of dimension $r(2d+1)-1$, although we will not need this. \\

\subsection{} If we generalized the above definition by requiring $\text{length}(\CF/\CF')=k$, we would obtain a correspondence which is too singular and too coarse for our needs. Instead, we will consider the \textbf{moduli of flags of sheaves}:
\begin{equation}
\label{eqn:flag}
\fZ_{d,d+k} = \{\CF_0 \supset_{p} \CF_1 \supset_{p} ... \supset_{p} \CF_{k}\} \ \subset \ \CM_d \times ... \times \CM_{d+k} 
\end{equation}
The following conjecture will be explained in the following sections, after we introduce the ADHM picture of $\fZ_{d,d+k}$. Note that while it would be nice to have a proof of this conjecture, we can get around it for the purposes of this paper. \\


\begin{conjecture}
\label{conj:struct}

The variety $\fZ_{d,d+k}$ is a local complete intersection of dimension 
$$(2d+k)r-1$$ 

\end{conjecture}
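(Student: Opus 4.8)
The plan is to use the ADHM description of $\fZ_{d,d+k}$, which (as will be developed in the following sections) presents it as a GIT quotient of a closed subscheme $\tfZ$ of an affine space by the action of $GL_d \times GL_{d+1} \times \cdots \times GL_{d+k}$. Concretely, a point of $\fZ_{d,d+k}$ is encoded by linear-algebra data $(X_i, Y_i, A_i, B_i)$ on each $\CM_{d+i}$ together with intertwining maps $\xi_i : \BC^{d+i} \to \BC^{d+i+1}$ realizing the inclusions $\CF_i \supset_p \CF_{i+1}$, subject to the commutation relations $X_{i+1}\xi_i = \xi_i X_i$, $Y_{i+1}\xi_i = \xi_i Y_i$, $\xi_i A_i = A_{i+1}$, and the ADHM moment map equations on each level. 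The strategy is the standard one for proving a quotient is a local complete intersection: exhibit $\tfZ$ inside a \emph{smooth} ambient space (the locus where all the flag data plus the $\CM$-data vary freely, which is smooth because each $\CM_{d+i}$ is smooth of dimension $2(d+i)r$ and the $\xi_i$ range over affine spaces $\Hom(\BC^{d+i},\BC^{d+i+1})$) cut out by the right number of equations, and check that the fibers of the resulting map have the expected dimension.

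The key steps, in order, would be: (1) Write down the "pre-quotient" space as the total space of the vector bundle of tuples $(\xi_0,\ldots,\xi_{k-1})$ over $\CM_d \times \cdots \times \CM_{d+k}$; this is smooth of dimension $\sum_{i=0}^k 2(d+i)r + \sum_{i=0}^{k-1}(d+i)(d+i+1)$. (2) Identify $\fZ_{d,d+k}$ with the zero locus of a section of an explicit vector bundle $\CE$ encoding (a) the commutation relations $[X_{i+1}\xi_i - \xi_i X_i, \ Y_{i+1}\xi_i - \xi_i Y_i, \ \xi_i A_i - A_{i+1}]$ for each $i$, and (b) the condition that $B_{i+1} = $ the appropriate compression of $B_i$ (or, dually, that $\xi_i$ has the correct corank one and the quotient is a skyscraper at $p$) — one must be careful to count these so that $\operatorname{rk}\CE$ equals the codimension. (3) Compute: the expected dimension of the zero locus is (dimension of pre-quotient) $- \operatorname{rk}\CE$, and after subtracting $\sum_{i=0}^k (d+i)^2$ for the free $GL_{d+i}$-action one should land on $(2d+k)r - 1$. (4) Prove the zero locus actually has this dimension everywhere, not just generically: equivalently, that $\fZ_{d,d+k}$ is equidimensional of the expected dimension, which by the standard l.c.i. criterion for zero loci of sections then forces it to be a local complete intersection.

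The main obstacle is step (4): controlling the dimension of \emph{every} fiber, including the most degenerate flags (e.g.\ when several successive quotients "pile up" in a very non-reduced way at $p$, or when the sheaf $\CF_0$ itself is maximally singular). The slickest route is probably to induct on $k$ using the forgetful/projection maps $\fZ_{d,d+k} \to \fZ_{d,d+k-1}$ and $\fZ_{d,d+k}\to \fZ_{d+1,d+k}$, analyzing the fibers of each — a fiber of the first map over a flag $(\CF_0 \supset \cdots \supset \CF_{k-1})$ parametrizes the choices of $\CF_k \subset_p \CF_{k-1}$, i.e.\ lines in the fiber at $p$ of a suitable bundle, which is a projective space of the right dimension; one then needs a semicontinuity/upper-bound argument to rule out jumping. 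Alternatively, one can try to reduce to the local model at $p$, where $\fZ_{d,d+k}$ becomes (a bundle over) a variety of nested ideals in $\BC[[x,y]]$, and invoke known l.c.i.\ results (or Ellingsrud--Str\o mme-type arguments) for punctual flag Hilbert schemes — but promoting such local statements to the global framed moduli space while keeping track of the framing data $A,B$ is exactly the delicate point, which is presumably why the statement is left as a conjecture.
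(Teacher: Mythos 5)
There is a fundamental issue to flag first: the statement you are addressing is not proved in the paper at all --- it is stated as Conjecture \ref{conj:struct}, and the author explicitly works around it by promoting Theorem \ref{thm:towers} to the \emph{definition} of $\fZ_{d,d+k}$ precisely because no proof is available. What the paper does establish, in Subsection \ref{sub:flagadhm} and the dimension count that follows it, is essentially the easy half of your argument: the ADHM presentation $\fZ_{d,d+k}=\eta^{-1}(0)^s/GL_{d,d+k}$, where $\eta$ maps an affine space of dimension $2d^2+2dk+k(k-1)+(2d+k)r$ to a space of dimension $d^2+dk+\tfrac{(k-1)(k-2)}{2}$, yielding only the inequality $\dim\fZ_{d,d+k}\geq (2d+k)r-1$. (Incidentally, the paper's presentation is leaner than the one you set up: rather than carrying separate quadruples $(X_i,Y_i,A_i,B_i)$ at every level together with intertwiners $\xi_i$, it observes that all the data is determined by a single $(X,Y,A,B)$ with $X,Y$ in the space $\mat_{d,d+k}$ of matrices nilpotent on the flag, $A$ at the top level and $B$ at the bottom; this eliminates the delicate bookkeeping you acknowledge in your step (2), where getting $\operatorname{rk}\CE$ to equal the codimension is not automatic.)

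The genuine gap is your step (4), and you correctly diagnose it yourself: to conclude that $\fZ_{d,d+k}$ is a local complete intersection of dimension $(2d+k)r-1$ you must show that $\eta^{-1}(0)^s$ has \emph{exactly} the expected dimension at every point, including over the most degenerate punctual flags, and neither of your two suggested routes is carried out. Worse, the induction via the projections $\pi^{\pm}$ is circular as proposed: the paper's proof of Theorem \ref{thm:towers} identifies the fibers of $\pi^{+}$ and $\pi^{-}$ with projectivizations of $H^{-1}(\tCW_1|_p)$ and $H^{0}(\tCW_2|_p)$ and then controls their dimensions (i.e.\ rules out unexpected cohomology of $\tCW_1$, $\tCW_2$) \emph{only by invoking Conjecture \ref{conj:struct}}; so one cannot feed those fiber dimensions back into an induction proving the conjecture without an independent semicontinuity argument excluding jumping of these cohomology groups. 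That missing argument is the entire content of the conjecture. As it stands, your proposal is a strategy outline that reproduces the paper's lower bound and its heuristics, but it does not prove the statement --- which remains open in the paper as well.
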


\begin{remark}
\label{rem:duh0}

Note that the expected dimension of $\fZ_{d,d+k}$ is $k-1$ greater than the expected dimension of the composed correspondence $\fZ_{d,d+1} \circ... \circ \fZ_{d+k-1,d+k}$. Therefore, the functor defined by $\fZ_{d,d+k}$ as a correspondence is not simply the composition of the usual Nakajima functors. We will come back to this point in Remark \ref{rem:duh}. \\

\end{remark}


\subsection{} 
\label{sub:flagadhm}

In the ADHM picture, a point of $\fZ_{d,d+k}$ is given by tuples of matrices $(X_i,Y_i,A_i,B_i) \in \CM_{i}$ for $i\in \{d,...,d+k\}$, which preserve a fixed flag of quotients:
\begin{equation}
\label{eqn:flaggy}
\BC^{d+k} \twoheadrightarrow \BC^{d+k-1} \twoheadrightarrow ... \twoheadrightarrow \BC^d
\end{equation}
The matrices $X_{d},Y_{d},...,X_{d+k-1},Y_{d+k-1}$ are all determined by $X=X_{d+k}$ and $Y=Y_{d+k}$, which lie in the subspace $\mat_{d,d+k}$ of order $d+k$ matrices that act nilpotently on the flag \eqref{eqn:flaggy}. In coordinates, such matrices take the following form (pictured below for $d=2$ and $k=3$):
\begin{equation}
\label{eqn:matrixpres}
\left( \begin{array}{ccccc}
0 & * & * & * & * \\
0 & 0 & * & * & * \\
0 & 0 & 0 & * & * \\
0 & 0 & 0 & * & * \\
0 & 0 & 0 & * & * \end{array} \right)
\end{equation}
Similarly, the maps $A_{d+i}$ are all determined by $A = A_{d+k}:\BC^r \longrightarrow \BC^{d+k}$ and the maps $B_{d+i}$ are all determined by $B = B_d: \BC^{d} \longrightarrow \BC^r$. If we let:
$$
\mat_{d,d+k} \oplus \mat_{d,d+k} \oplus \Hom(\BC^r,\BC^{d+k}) \oplus \Hom(\BC^d,\BC^r) \stackrel{\eta}\longrightarrow \mat'_{d,d+k} 
$$

$$
\eta(X,Y,A,B) = [X,Y]+AB \in \mat_{d,d+k}
$$
\footnote{Here, we let $\mat'_{d,d+k} \subset \mat_{d,d+k}$ denote the subspace of matrices such that the first $k-1$ entries on the superdiagonal vanish} we see that:
$$
\fZ_{d,d+k} = \eta^{-1}(0)^s/GL_{d,d+k}
$$
where $GL_{d,d+k}$ denotes the subgroup of invertible matrices which preserve the flag \eqref{eqn:flaggy}. As before, the superscript $s$ denotes semistable points, i.e. those such that the whole $\BC^{d+k}$ is generated by $X$ and $Y$ acting on $\im A$. \\

\subsection{} The dimension of the affine space of matrices $(X,Y,A,B)$ as above is $2d^2 + 2dk + k(k-1) + (2d+k)r$, whereas the number of equations imposed by setting $\eta=0$ is $d^2 + dk + \frac {(k-1)(k-2)}2$. This implies that $\eta^{-1}(0)$ is an affine variety of dimension:
\begin{equation}
\label{eqn:dimcount}
\geq d^2 + dk + \frac {(k-1)(k+2)}2 + (2d+k)r
\end{equation}
Since $\fZ_{d,d+k}$ is the quotient of $\eta^{-1}(0)^s$ by a free action of a group of dimension $d^2 + dk + \frac {k(k+1)}2$, we have:
$$
\text{dim}(\fZ_{d,d+k}) \geq (2d+k)r-1
$$
Conjecture \ref{conj:struct} claims that the above inequality is an equality. If we assume this conjecture, by the above discussion, the condition $\eta = 0$ gives a system of equations that (locally) cut out $\fZ_{d,d+k}$. \\


\subsection{} The variety $\fZ_{d,d+k}$ comes with natural projection maps to $\CM_d,...,\CM_{d+k}$, and we can talk about lifting tautological vector bundles from the various spaces $\CM_{d+i}$. Restricted to $\fZ_{d,d+k}$, these bundles form a flag:
\begin{equation}
\label{eqn:flagtaut}
\CT_{d+k} \twoheadrightarrow \CT_{d+k-1} \twoheadrightarrow ... \twoheadrightarrow \CT_{d}
\end{equation}
whose fibers are precisely the flag \eqref{eqn:flaggy}. Because the maps $X,Y:\CT_\bullet \longrightarrow \CT_\bullet$ are nilpotent, they can actually be extended uniquely to maps $\CT_\bullet \longrightarrow \CT_{\bullet+1}$. We will be very interested in:
$$
\CL_i = \Ker(\CT_{d+i} \twoheadrightarrow \CT_{d+i-1}) \qquad \forall i\in \{1,...,k\}
$$
which will be called \textbf{tautological line bundles} on $\fZ_{d,d+k}$. We will also consider the following universal complexes on $\fZ_{d,d+k}$, close counterparts of \eqref{eqn:complex}:
\begin{equation}
\label{eqn:w1}
\tCW_1 = [\CT_{d}  \stackrel{\Phi_1}\longrightarrow q_1^{-1} \CT_{d} \oplus q_2^{-1}\CT_{d} \oplus q^{-1}\CO^{\oplus r}  \stackrel{\Psi_1}\longrightarrow q^{-1}\CT_{d+1}]
\end{equation}

\begin{equation}
\label{eqn:w2}
\tCW_2 = [q\CT_{d+k-1}  \stackrel{\Phi_2}\longrightarrow q_2\CT_{d+k} \oplus q_1\CT_{d+k} \oplus \CO^{\oplus r}  \stackrel{\Psi_2}\longrightarrow \CT_{d+k}]
\end{equation}
where the maps $\Phi$ and $\Psi$ are given by the same formulas \eqref{eqn:maps}. Note that one of the tautological bundles in each of the above complexes has a different index from the others. \\  

\subsection{} The advantage in studying the spaces $\fZ_{d,d+k}$ is that we can describe them inductively in $k$. Namely, we have two projection maps:
\begin{equation}
\label{eqn:projmaps}
\xymatrix{& \fZ_{d,d+k} \ar[ld]_{\pi^-} \ar[rd]^{\pi^+} & \\
\fZ_{d,d+k-1} & & \fZ_{d+1,d+k}}
\end{equation}
given in the sheaf picture by forgetting the last (respectively, first) sheaf in the flag \eqref{eqn:flag}. In the ADHM picture, the projection maps are given by forgetting $\BC^{d+k}$ (respectively, $\BC^{d}$) in \eqref{eqn:flaggy}. \\

\begin{theorem}
\label{thm:towers}

If we assume Conjecture \ref{conj:struct}, then:
\begin{equation}
\label{eqn:ram1}
\pi^+: \fZ_{d-1,d+k} = \BP (\tCW_1^\vee[1]) \longrightarrow \fZ_{d,d+k}
\end{equation}
\begin{equation}
\label{eqn:ram2}
\pi^-: \fZ_{d,d+k+1} = \BP (\tCW_2) \longrightarrow \fZ_{d,d+k}
\end{equation}
\footnote{When $V$ is a sheaf, we write $\BP(V) = \text{Proj}(S^*V)$. When $V$ is a complex, the same notation holds, but the resulting object is a DG-scheme} The tautological line bundles which are forgotten by the above projection maps are connected with the Serre twisting sheaves via:
\begin{equation}
\label{eqn:line}
\CL_1 = \CO_{\pi^+}(-1), \qquad \CL_{k+1} = \CO_{\pi^-}(1)
\end{equation}

\textbf{} 

\end{theorem}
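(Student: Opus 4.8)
The plan is to establish both projective bundle descriptions by exhibiting, at the level of the ADHM presentation, a natural surjection from the relevant universal complex onto the "forgotten" one-dimensional piece of the flag, and then to verify that this surjection identifies the total space of the flag variety one step up with the projectivization. I will treat \eqref{eqn:ram1} in detail; \eqref{eqn:ram2} is entirely analogous after swapping the roles of $\BC^d$ and $\BC^{d+k}$ and dualizing.

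First I would set up the fiber of $\pi^+ \colon \fZ_{d-1,d+k} \to \fZ_{d,d+k}$ over a point of $\fZ_{d,d+k}$. Such a point is a tuple of matrices preserving the flag $\BC^{d+k} \twoheadrightarrow \dots \twoheadrightarrow \BC^d$ (with its semistability), and a point in the fiber is the extra data of a hyperplane $\BC^{d-1} \subset \BC^d$ through which $X$, $Y$, $A$ factor appropriately and which is preserved by the $X,Y$-action generated from $\im A$. Dually, this is the same as a one-dimensional quotient $\ell$ of $\BC^d$ — i.e. a line in $(\BC^d)^\vee$ — compatible with the matrix data; the compatibility conditions are precisely that $\ell$ is killed by the transposes of the arrows entering $\CT_d$, which is exactly the statement that $\ell$ is a quotient of the cokernel of $\Psi_1^\vee$, equivalently that the dual line $\ell^\vee$ lies in $\Ker \Psi_1 / \im \Phi_1$. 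This is where the shift and the dual in $\BP(\tCW_1^\vee[1])$ come from: the DG-scheme $\Proj(S^* \tCW_1^\vee[1])$ parametrizes exactly such lines in the degree-$0$ cohomology of $\tCW_1$, compatibly with the DG-structure coming from the differentials $\Phi_1, \Psi_1$. I would then check that the semistability condition for $\fZ_{d-1,d+k}$ matches the one pulled back from $\fZ_{d,d+k}$ together with the requirement that the chosen line be nonzero, so that no extra open locus is lost.

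Next I would globalize this fiberwise identification to a map of spaces (or DG-schemes) over $\fZ_{d,d+k}$, using Conjecture \ref{conj:struct} to guarantee that $\fZ_{d-1,d+k}$ has the expected dimension: a dimension count shows $\dim \fZ_{d-1,d+k} = (2(d-1)+(k+1))r - 1 = (2d+k)r - 1 + (r-1)$, and the rank of $\tCW_1$ as a $K$-theory class is $r - (1-q_1)(1-q_2)[\CT_d]$ evaluated appropriately — so the fibers of $\BP(\tCW_1^\vee[1])$ have the right dimension $r-1$, matching the fiber dimension of $\pi^+$. Combined with the fiberwise bijection and the fact that both sides are (locally, granting the conjecture) cut out by the expected equations, a standard argument identifies the two. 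Finally, the line bundle statements \eqref{eqn:line} follow by unwinding the identification: the universal quotient line on $\Proj(S^*\tCW_1^\vee[1])$, by construction, is the dual of the tautological sub-line in $\Ker\Psi_1/\im\Phi_1$, which corresponds under the dictionary above to $\CL_1 = \Ker(\CT_d \twoheadrightarrow \CT_{d-1})$; matching conventions for $\CO(\pm 1)$ gives $\CL_1 = \CO_{\pi^+}(-1)$ and, symmetrically, $\CL_{k+1} = \CO_{\pi^-}(1)$.

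The main obstacle I expect is the careful bookkeeping around the DG/derived structure: $\tCW_1$ is only quasi-isomorphic to a two-term complex of vector bundles, so $\BP(\tCW_1^\vee[1])$ is genuinely a DG-scheme, and I must make sure that the equations cutting out $\fZ_{d-1,d+k}$ inside its ambient affine quotient (the $\eta = 0$ conditions of Subsection \ref{sub:flagadhm}) correspond precisely to the Koszul-type differential on $\Proj(S^*\tCW_1^\vee[1])$ rather than merely to the underlying classical projectivization. This is the point where Conjecture \ref{conj:struct} is really used — it is what lets me say that the naive equations generate the right ideal, so that the scheme-theoretic (indeed DG) structures agree and not just the underlying reduced spaces. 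The remaining verifications — that $X_{d-1}, Y_{d-1}, A_{d-1}$ are determined as claimed, that $B$ is unaffected, and that semistability is preserved — are routine linear algebra of the kind already carried out in Subsection \ref{sub:flagadhm}.
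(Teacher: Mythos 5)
There is a genuine gap in your identification of the fiber of $\pi^+$, and it propagates through the rest of the argument. The flag \eqref{eqn:flaggy} is a flag of \emph{quotients}, so extending it by one more step $\BC^d \twoheadrightarrow \BC^{d-1}$ amounts to choosing the kernel of that surjection, i.e.\ a \emph{sub}-line $\ell \subset \CT_d$; the compatibility condition is that $\ell$ be annihilated by $X$, $Y$ and $B$ (not $A$ --- the map $A$ is the one relevant for extending the flag at the top, i.e.\ for $\pi^-$). In other words $\ell$ lies in $\Ker \Phi_1|_p = H^{-1}(\tCW_1|_p)$, the degree $-1$ cohomology of $\tCW_1$; this is exactly why the shift $[1]$ appears in $\BP(\tCW_1^\vee[1])$, since $H^0(\tCW_1^\vee[1]) = (H^{-1}(\tCW_1))^\vee$. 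Your proposal instead places the line in $H^0(\tCW_1) = \Ker \Psi_1 / \im \Phi_1$ via a cokernel-of-$\Psi_1^\vee$ argument. That is the correct mechanism for $\pi^-$, where the new datum is a one-dimensional \emph{extension} $\CT_{d+k+1}|_p \twoheadrightarrow \CT_{d+k}|_p$, i.e.\ a line in $\text{Coker}(\Psi_2|_p)^\vee \cap \Ker(\Phi_2|_p)^\vee = H^0(\tCW_2|_p^\vee)$, but it is the wrong mechanism for $\pi^+$.

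The error surfaces in your dimension count, which is also incorrect: $\dim \fZ_{d-1,d+k} = (2(d-1)+(k+1))r - 1 = (2d+k)r - 1 - r$, not $(2d+k)r - 1 + (r-1)$, so the fibers of $\pi^+$ have \emph{negative} virtual dimension $-r$, consistent with $\tCW_1^\vee[1]$ having virtual rank $1-r$; in particular $\BP(\tCW_1^\vee[1])$ is genuinely a DG-scheme and not a $\BP^{r-1}$-bundle. This is also why the two cases are \emph{not} ``entirely analogous after dualizing'': for \eqref{eqn:ram2} the paper must, and does, prove that $\tCW_2$ has cohomology concentrated in degree $0$ --- exactness at the last step from surjectivity of $\Psi_2$ (semistability), exactness at the first step because a jump of $H^0(\tCW_2|_p)$ on an open locus would force $\fZ_{d,d+k+1}$ to exceed the dimension asserted by Conjecture \ref{conj:struct} --- so that $\BP(\tCW_2)$ is an honest $\BP^r$-bundle, whereas no such concentration holds (or is needed) for $\tCW_1^\vee[1]$. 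Your overall strategy (fiberwise identification, invoking the conjecture so that the $\eta=0$ equations cut out the correct scheme/DG structure, then reading off the tautological line bundles) is the same as the paper's, but as written the central identification for \eqref{eqn:ram1} and the dimension bookkeeping would need to be redone.
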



\begin{proof} To prove \eqref{eqn:ram1}, let us first study the fiber of $\pi^+$ above an arbitrary point $p \in \fZ_{d,d+k}$. The fiber consists of colength 1 quotients $\CT_d|_p \twoheadrightarrow \CT_{d-1}|_p$, together with maps $X,Y$ and $B$ that agree with the ones defining $p$. This datum is equivalent to a line inside $\CT_d$ which is annihilated by $X,Y$ and $B$, in other words, a line in: 
\begin{equation}
\label{eqn:jon}
\Ker \Phi_1|_p = H^{-1}(\tCW_1|_p)
\end{equation} 
However, it is not true that the fiber of $\pi^+$ above $p$ is the projectivization of the above vector space. That is because we have yet to impose the equations $\eta=0$, which by Conjecture \ref{conj:struct} cut out the varieties $\fZ_{d,d+k}$ and $\fZ_{d-1,d+k}$ scheme-theoretically. The $-1$-st and $0$-th terms of the complex $\tCW^1$ are precisely local generators and equations that cut out $\fZ_{d-1,d+k}$. Therefore, $\fZ_{d-1,d+k}$ is the projectivization of the whole complex $\tCW^\vee_1[1]$, precisely the claim \eqref{eqn:ram1}.
 
\textbf{} \\
As for \eqref{eqn:ram2}, let us first show that the fiber of $\pi^-$ above an arbitrary point $p\in \fZ_{d,d+k}$ is given by the following $r$-dimensional projective space:
\begin{equation}
\label{eqn:snow}
(\pi^-)^{-1}(p) = \BP\left(H^0(\tCW_2|_p) \right)
\end{equation}
Indeed, the fiber of $\pi^-$ above $p$ consists of all colength 1 extensions $\CT_{d+k+1}|_p \twoheadrightarrow \CT_{d+k}|_p$, together with commuting maps $X,Y$ and $A$. Since $X$ and $Y$ are nilpotent, this comes down to a one-dimensional extension:
$$
\xymatrix{q_2 \CT_{d+k}|_p \oplus q_1 \CT_{d+k}|_p \oplus \CO|_p^{\oplus r} \ar@{-->}[rd] \ar[r]^-{\Psi_2} & \CT_{d+k}|_p \\
& \CT_{d+k+1}|_p \ar@{->>}[u]}
$$
or in other words, a line in the cokernel of $(\Psi_2|_p)^\vee$. Since the extended maps $X,Y$ and $A$ must satisfy the moment map equations $[X,Y]+AB=0$, this line must also lie in the kernel of $(\Phi_2|_p)^\vee$. This implies that the fiber of $\pi^-$ above $p$ is the projective space of lines inside:
$$
\text{Ker}(\Phi_2|_p)^\vee / \ \text{Im}(\Psi_2|_p)^\vee = H^0(\tCW_2|^\vee_p)
$$
This proves \eqref{eqn:snow}. To obtain \eqref{eqn:ram2}, one needs to show that the complex $\tCW_2$ has no other cohomology than in degree 0. Semistability forces $\Psi_2$ to be surjective, and hence the complex $\tCW_2$ is exact at the last step. Meanwhile, if it failed to be exact at the first step, we would have an open locus of points $p\in \fZ_{d,d+k}$ where $\Phi_2$ drops rank. This would imply an open locus of points $p\in \fZ_{d,d+k}$ where $H^0(\tCW_2|_p)$ has dimension larger than expected. This would imply that $\fZ_{d,d+k+1}$ would have dimension larger than expected, contradicting Conjecture \ref{conj:struct}. 



\end{proof}

\subsection{} Since we don't yet have a proof of Conjecture \ref{conj:struct}, we let Theorem \ref{thm:towers} be the \textbf{definition} of the moduli spaces $\fZ_{d,d+k}$. By this we mean that we define:
$$
\fZ^\pm_{d,d} = \CM_d
$$
and then iteratively define the DG-schemes $\fZ^\pm_{d,d+k}$ as the projective towers given by the complexes \eqref{eqn:ram1} and \eqref{eqn:ram2}.  Then Theorem \ref{thm:towers} claims that Conjecture \ref{conj:struct} implies $\fZ^\pm_{d,d+k} \cong \fZ_{d,d+k}$. 

\textbf{} \\
With this nuisance in mind, let us note that all the functors defined in this paper will (strictly speaking) pertain to the DG-schemes $\fZ^\pm_{d,d+k}$. However, Conjecture \ref{conj:struct} is supported by a significant amount of evidence, and so we will suppress the extra notation $\pm$. Henceforth, in all our constructions, we will label either of these spaces simply by $\fZ_{d,d+k}$, and keep the derived picture only in the back of our minds. \\ 

\subsection{} The moduli of flags acts as a correspondence between $\CM_d$ and $\CM_{d+k}$:
\begin{equation}
\label{eqn:projmaps2}
\xymatrix{& \fZ_{d,d+k} \ar[ld]_{p^-} \ar[rd]^{p^+} & \\
\CM_d & & \CM_{d+k}}
\end{equation}
Indeed, one can use the maps \eqref{eqn:projmaps2} to define mutually adjoint functors on the equivariant derived categories of the moduli spaces $\CM_\bullet$:
$$
\CC_\bullet \longrightarrow \CC_{\bullet \pm k}, 
$$
\begin{equation}
\label{eqn:derived}
\qquad c \longrightarrow Rp^\pm_*(m(\CL_1,...,\CL_k) \otimes p^{\mp *}(c))
\end{equation}
for any Laurent polynomial $m \in \kk[z^{\pm 1}_1,...,z_k^{\pm 1}]$. If we do not wish to assume Conjecture \ref{conj:struct}, then we need to define the push-forward maps $Rp^\pm_*$ as in the previous Subsection: a composition of push-forwards down a projective tower. We will study the linear maps induced by these functors at the level of $K-$theory:
$$
x^\pm_m : K_\bullet \longrightarrow K_{\bullet \pm k} 
$$
\begin{equation}
\label{eqn:opop}
x^\pm_m(c) = p^\pm_*\left[ m(l_1,...,l_k) \cdot p^{\mp *}(c) \right], \qquad \forall c\in K_\bullet
\end{equation}
where $l_i = [\CL_i] \in K^*_T(\fZ_{\bullet, \bullet+k})$. In Theorem \ref{thm:main}, we will see that the maps \eqref{eqn:opop} are described by certain elements in the shuffle algebra, to be defined in Section \ref{sec:shuf}. \\

\subsection{} In order to compute the linear maps \eqref{eqn:opop}, we will need to know how to push-forward classes under the projectivizations of Theorem \ref{thm:towers}:
$$
\pi^+:\fZ_{d-1,d+k} \longrightarrow \fZ_{d,d+k}, \qquad \qquad \pi^-:\fZ_{d,d+k+1} \longrightarrow \fZ_{d,d+k}
$$ 
We let $l$ denote the class of the tautological line bundle which is forgotten by the maps $\pi^+$ (resp. $\pi^-$), i.e. $l=l_1$ (resp. $l=l_k$). We have the following result: \\

\begin{lemma}
\label{lem:push}

For any rational function $r(u)$ with coefficients in $\fZ_{d,d+k}$, we have:
$$
\pi^+_*(r(l)) = \int r(u) \Lambda(\tCW_1,u) Du
$$

$$
\pi^-_*(r(l)) = - \int r(u) \Lambda(-\tCW^\vee_2,u^{-1}) Du
$$
where the contour separates the following subsets of complex numbers:
$$
\emph{Poles}(r(u)) \cup \{0,\infty\} \qquad \emph{from} \qquad \emph{Poles}(\Lambda(\pm \tCW^*_\flat, u^{\pm 1}))
$$ 
\footnote{where $(*,\flat) = ( \text{ } , 1)$ or $(\vee,2)$} For a rational function with coefficients in $K-$theory, we define its poles by formally decomposing $K-$theory classes into Chern roots, and treating these Chern roots as complex numbers. We abbreviate $Du = \frac {du}{2\pi i u}$. \\

\end{lemma}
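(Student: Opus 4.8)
The plan is to reduce both identities to one general pushforward formula for a projectivized perfect complex, and then specialize.

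By Theorem \ref{thm:towers} — equivalently, by the definition of the towers $\fZ^\pm$ as iterated projectivizations — the map $\pi^+$ is the structure map of $\BP(\tCW_1^\vee[1])$, the map $\pi^-$ is the structure map of $\BP(\tCW_2)$, and in each case the forgotten tautological line bundle is $\CO_{\pi^+}(-1)$ (resp. $\CO_{\pi^-}(1)$); see \eqref{eqn:ram1}--\eqref{eqn:line}. Hence the lemma follows from the general statement: if $\CE$ is a perfect complex on a $T$-space $X$, $\pi:\BP(\CE)\to X$ the projectivization, and $\ell=[\CO_\pi(-1)]$, then for any rational function $r$ with coefficients in $K_T(X)$ one has
\[
\pi_*\big(r(\ell)\big)\ =\ \int\frac{r(u)\,Du}{\Lambda(\CE^\vee,u)},
\]
the contour separating $\{0,\infty\}\cup\textrm{Poles}(r)$ from the zeros of $\Lambda(\CE^\vee,u)$, oriented so that the integral equals $(-1)$ times the sum of the residues at those zeros. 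Granting this, the lemma is pure algebra of $\Lambda$-classes: for $\pi^+$, $\CE^\vee=(\tCW_1^\vee[1])^\vee=\tCW_1[-1]$, so $[\CE^\vee]=-[\tCW_1]$ and $\Lambda(\CE^\vee,u)=\Lambda(\tCW_1,u)^{-1}$, turning the integrand into $r(u)\Lambda(\tCW_1,u)$; for $\pi^-$ the class of $\CO_{\pi^-}(1)$ is $\ell^{-1}$, so one applies the general formula to $\tilde r(u)=r(u^{-1})$ and substitutes $u\mapsto u^{-1}$, which produces the Jacobian $Du\mapsto-Du$ and the shape $-\int r(u)\,\Lambda(-\tCW_2^\vee,u^{-1})\,Du$. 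Reading off the denominators identifies the pole sets with the $\textrm{Poles}(\Lambda(\pm\tCW^{*}_\flat,u^{\pm1}))$ of the statement.

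It remains to prove the general formula. When $\CE=V$ is an honest vector bundle of rank $n$, by the splitting principle we may take $V=\bigoplus_a L_a$ with $[L_a]=v_a$; fibrewise $T$-equivariant localization along $\BP(V)\to X$ — whose fibrewise fixed points are the coordinate quotients $V\twoheadrightarrow L_a$, where $\ell$ restricts to $v_a^{-1}$ and the fibrewise tangent characters are $v_a/v_b$ for $b\ne a$ — gives $\pi_*(r(\ell))=\sum_a r(v_a^{-1})/\prod_{b\ne a}(1-v_b/v_a)$, and since $\Lambda(V^\vee,u)=\prod_a(1-uv_a)$ has a simple zero at each $u=v_a^{-1}$, a direct residue computation rewrites this sum as $-\sum_a\textrm{Res}_{u=v_a^{-1}}\big(r(u)/(u\,\Lambda(V^\vee,u))\big)$, which is the asserted integral. (Localization can be bypassed: $K_T(\BP(V))=K_T(X)[\ell]/\big(\Lambda(V^\vee,\ell)\big)$, the classes $\ell^0,\dots,\ell^{n-1}$ form a $K_T(X)$-basis, and their pushforwards are exactly the residues of $1/\Lambda(V^\vee,u)$.) For a general perfect complex — in our cases already a three-term complex of vector bundles $[\CE^{a}\to\cdots\to\CE^0]$ — one uses that $\BP(\CE)$ is built over $\BP(\CE^0)$ by adjoining the remaining terms through the differentials: each odd-degree term contributes a Koszul complex (a derived zero locus), each deeper even-degree term continues it by a polynomial Koszul--Tate part, and so the structure sheaf of $\BP(\CE)$, viewed as a class on $\BP(\CE^0)$, is the alternating product $\prod_{j\le 0}\Lambda(\CE^j,\ell)^{(-1)^{j}}$ of the corresponding $\lambda_{-1}$-classes. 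Inserting this and pushing forward along $\BP(\CE^0)\to X$ via the vector-bundle case, and using multiplicativity of $\Lambda$ over exact triangles, collapses everything to $\int r(u)\,Du/\Lambda(\CE^\vee,u)$, the contour now enclosing all the poles of $1/\Lambda(\CE^\vee,u)$, i.e. all the virtual fibrewise fixed points.

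The main obstacle is this last step: making precise the pushforward along a genuinely derived/DG projectivization — in particular, identifying the structure sheaf of $\BP(\CE)$ with the stated alternating product of $\lambda_{-1}$-classes of the twisted lower terms — since here $\tCW_1$ has virtual rank $1-r\le 0$ and cohomology in more than one degree, so $\BP(\tCW_1^\vee[1])$ is not an honest projective bundle. Once that is settled, the rest is bookkeeping: tracking the shift $[1]$, the duals, the equivariant twists $q_1,q_2,q$, and the orientation of the contour; the cleanest way to pin down the last of these, and a useful consistency check on the whole formula, is to evaluate both sides at torus fixed points and match the explicit localization computations of Section \ref{sec:fixed}.
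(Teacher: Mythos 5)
Your proposal is correct and follows essentially the same route as the paper: both reduce the lemma to the standard residue formula for the pushforward along the projectivizations of Theorem \ref{thm:towers}, computed formally with the (alternating) Chern roots of $\tCW_1$ and $\tCW_2$. The paper gets there slightly more directly, by taking $r(u)=u^k$ and identifying $\pi^+_*[\CO(-k)]$ with a symmetric power of the complex, whereas you verify the vector-bundle case by fibrewise localization and then flag the derived-projectivization step explicitly --- a step the paper absorbs into the formal Chern-root manipulation.
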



\begin{proof} Because of the choice of contour, it is enough to prove the statement for $r(u)=u^k$. In this case, \eqref{eqn:line} gives us:
$$
\pi^+_*(l^k) = \pi^+_*[\CO(-k)] = \left[ S^{-k}\tCW_1^\vee[-1] \right] 
$$
Let us write $[\tCW_1] = \sum \pm w_i$ formally as an alternating sum of Chern roots, and obtain:
$$
\pi^+_*(l^k) = \text{coefficient of }u^{-k}\text{ in }\prod_i \left(1 - \frac u{w_i}\right)^{\pm 1} = \int u^k \Lambda(\tCW_1,u) Du
$$
In Section \ref{sec:fixed}, we will interpret the Chern roots as the virtual characters of the torus action in the fibers of $\tCW_1$ above fixed points. The case of $\pi^-$ is treated analogously. 


\end{proof}

\section{The Double Shuffle Algebra}
\label{sec:shuf}

\subsection{}

Consider an infinite set of variables $z_1,z_2,...$, and take the $\kk-$vector space:
\begin{equation}
\label{eqn:big}
V = \bigoplus_{k \geq 0} \kk(z_{1},...,z_{k})^{\sym}
\end{equation}
of rational functions which are symmetric in the variables $z_1,...,z_k$. We can endow this vector space with a $\kk-$algebra structure by the multiplication:
$$
P(z_{1},...,z_{k}) * Q(z_{1},...,z_{l}) =
$$

\begin{equation}
\label{eqn:mult}
= \frac 1{k! l!} \cdot \textrm{Sym} \left[P(z_{1},...,z_{k})Q(z_{k+1},...,z_{k+l}) \prod_{i=1}^k \prod_{k+1 = j}^{k+l} \omega \left( \frac {z_i}{z_j} \right) \right]
\end{equation}
where:
\begin{equation}
\label{eqn:factor}
\omega(x)  = \frac {(x - 1)(x - q)}{(x - q_1)(x - q_2)} , \qquad \qquad q=q_1q_2
\end{equation}
and \textrm{Sym} denotes the symmetrization operator:
$$
\textrm{Sym}\left( P(z_1,...,z_k) \right) = \sum_{\sigma \in S(k)} P(z_{\sigma(1)},...,z_{\sigma(k)})
$$

\subsection{}

The \textbf{shuffle algebra} $\CA^+$ is defined as the subspace of $V$ consisting of rational functions of the form:
\begin{equation}
\label{eqn:shuf}
P(z_{1},...,z_{k}) = \frac {p(z_{1},...,z_{k}) \cdot \prod_{i \neq j} (z_{i} - z_{j})}{\prod_{i \neq j} (z_{i} - q_1 \cdot z_{ j})(z_{i} - q_2 \cdot z_{j})} 
\end{equation}
where $p$ is a symmetric Laurent polynomial that satisfies the \textbf{wheel conditions}:
\begin{equation}
\label{eqn:wheel}
p(z,q_1  z,q_1q_2  z,w_4,...,w_{k}) = p(z,q_2  z,q_1q_2  z,w_4,...,w_{k}) = 0
\end{equation}
for all variables $z,w_4,...,w_k$. This condition is vacuous for $k\leq 2$. It is straightforward to show that $\CA^+$ is an algebra, and it is shown in \cite{Shuf} to be generated by the rational functions in one variable $z^d :=z_1^d$, as $d$ goes over $\BZ$. \\




\subsection{}
\label{sub:cartan}

Let us define the \textbf{extended shuffle algebra} $\CA^\geq$ to be generated by $\CA^+$ and commuting generators $H_0,H_1,...$, under the relation:
\begin{equation}
\label{eqn:comm0}
P(z_{1},...,z_{k}) * H(z) = H(z) * \left[ P(z_{1},...,z_{ k}) \prod_{i=1}^k  \frac {\omega(z_i/z)}{\omega(z/z_i)}  \right]
\end{equation}
where:
$$
H(z) =  \sum_{n \geq 0} H_{n} \cdot z^{- n}
$$
Note that $H_0$ is central, and define commuting generators $p_1,p_2,...$ by:
\begin{equation}
\label{eqn:defp}
H(z) = H_0 \cdot \exp \left(\sum_{n=1}^\infty p_n \cdot \frac { (q_1^n-1)(q_2^n-1)(q^{-n}-1)z^{-n}}n\right)
\end{equation}
Note that relation \eqref{eqn:comm0} is equivalent to:
\begin{equation}
\label{eqn:comm1}
[p_n, P(z_{1},...,z_{k})] = P(z_{1},...,z_{k}) (z_1^n+...+z_k^n)
\end{equation}






\subsection{} It was shown in \cite{Shuf} that $\CA^\geq$ possesses a coproduct and a bialgebra pairing\footnote{The coproduct and pairing depend in \loccit on two parameters $c_1$ and $c_2$, which in the present paper are set equal to $c_1=q^{-1}$ and $c_2=1$}, which allow us to construct the Drinfeld double $\CA = \DD \CA^\geq$ (see \cite{Shuf} for details). This double algebra is isomorphic to the elliptic Hall algebra, which in turn is isomorphic to the spherical part of the ($N\longrightarrow \infty$) double affine Hecke algebra of type $\fsl_N$. Explicitly, the algebra $\CA$ has generators: 
$$
\CA = \left \langle P^+, \widetilde{P}^-, H_n^+, H_n^- \right \rangle 
$$
over all shuffle elements $P\in \CA^+$ and over all $n\geq 0$. As algebras, the negative generators $\widetilde{P}^-$ satisfy the same relations as those satisfied by the positive generators $P^+$. We will actually employ the slightly different notation:
$$
P^- = \widetilde{\tau(P)}^-
$$
where $\tau:\CA^+ \longrightarrow \CA^+$ is the anti-automorphism $P(z_1,...,z_k) \longrightarrow P(z_1^{-1},...,z_k^{-1})$. Now, the $P^-$ satisfy the \textbf{opposite} relations as those satisfied by the $P^+$:
$$
(P*Q)^- = Q^- * P^-
$$
for all $P,Q \in \CA^+$. \\ 

\subsection{} As for the relations between positive and negative generators, they look simpler when defined on the degree 1 generators:

\begin{equation}
\label{eqn:drinfeld}
[(z^d)^+ , (z^{d'})^-] = \frac {(1-q_1)(1-q_2)}{1-q} \left(H^-_{-d-d'}  \delta_{-d-d' \geq 0} - H^+_{d+d'}  \delta_{d+d'\geq 0}  \right)
\end{equation}
In fact, since the $z^d$ generate the whole algebra $\CA^+$, the above relation actually determines all relations between positive and negative shuffle algebra elements. \\

\subsection{}

For any set of variables $S$, we write: 
\begin{equation}
\label{eqn:lambda}
\Lambda_d^S = \prod_{s\in S} \Lambda(\CT_d,s),
\end{equation}
and note by Proposition \ref{prop:gen} that such classes span $K_d$. The following theorem was proved (in a different, but equivalent language), in \cite{FT} and \cite{SV}. Let $\e=1$ or $0$ depending on whether the sign is $\pm$, and also write $\CW^+ = \CW, \CW^- = \CW^\vee$: \\

\begin{theorem}
\label{thm:act}




There is an action of $\CA$ on $K$, given by $H^\pm = (-q^\e)^rt_1...t_r$,
\begin{equation}
\label{eqn:diagonal}
p^\pm_n\cdot c = \pm ([\CT]^{\pm n}) \cdot c
\end{equation}
\footnote{The exponent above $[\CT]$ refers to a plethysm: if $[\CT] = a_1+...+a_d$, then $[\CT]^n = a_1^n + ... + a_d^n$} and:
\begin{equation}
\label{eqn:formula}
P^\pm \cdot \Lambda_d^S = (\pm 1)^k \Lambda_{d\pm k}^S :\int:  P(u_1,...,u_k) \prod_{i=1}^k \frac {\Lambda(\pm \CW^\pm_{d \pm k}, u^{\pm 1}_iq^\e)}{u_i^{r\e}\prod_{s \in S} \left(1 - \frac s{u_i} \right)^{\pm 1}} \qquad
\end{equation}
for all $P\in \CA^+$. 

\end{theorem}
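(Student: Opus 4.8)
We take formulas \eqref{eqn:diagonal} and \eqref{eqn:formula}, together with the prescription that the central elements $H_0^\pm$ act by the scalar $(-q^\e)^r t_1 \cdots t_r$, as the definition of operators on $K = \bigoplus_{d\geq 0} K_d$, and then verify that they are well-defined and that they satisfy the defining relations of $\CA$. Since $\CA$ is generated by $\CA^+$, by its opposite copy, and by the commuting elements $H_n^\pm$, and since $\CA^+$ is generated by the one-variable elements $z^d$ ($d \in \BZ$) by \cite{Shuf}, it is enough to check: (i) $P \longmapsto P^\pm$ is $\kk$-linear; (ii) $(P*Q)^+ = P^+ Q^+$ and $(P*Q)^- = Q^- P^-$; (iii) the commutation relations \eqref{eqn:comm1} between the $p_n^\pm$ and the $P^\pm$, and the centrality of $H_0^\pm$; (iv) the Drinfeld relation \eqref{eqn:drinfeld} on the generators $(z^d)^\pm$. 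A second, parallel route — which I would use as a consistency check — is to invoke the theorem of \cite{FT} and \cite{SV} and to observe that, under the isomorphism of \cite{Shuf} identifying $\CA$ with the elliptic Hall algebra (parameters normalized to $c_1 = q^{-1}$, $c_2 = 1$), their action matches \eqref{eqn:formula}; this reduces to a comparison of conventions, the main input being the normalization \eqref{eqn:gamma} of $\CW$.

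For well-definedness I pass to torus fixed points. By the localization theorem, $K_d \otimes_\kk \textrm{Frac}(\kk)$ embeds into $\bigoplus_{|\lambda|=d} \textrm{Frac}(\kk)$, the sum running over $r$-tuples of partitions of total size $d$, and $\Lambda_d^S|_\lambda = \prod_{s\in S}\prod_{\square \in \lambda}(1 - s/\chi_\square)$, where $\chi_\square$ is the torus weight of the box $\square$. I then evaluate the right-hand side of \eqref{eqn:formula} at a fixed point $\mu \in \CM_{d\pm k}$ by residues: using the fixed-point value of $[\CW_{d\pm k}]$ read off from \eqref{eqn:gamma}, the contour integral collapses to a finite sum indexed by the ways of adding, respectively removing, $k$ boxes from $\mu$ so as to reach a diagram $\lambda$ with $|\lambda| = d$; crucially, the resulting expression depends on $\Lambda_d^S$ only through the numbers $\Lambda_d^S|_\lambda$. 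Hence the prescription descends from the spanning set $\{\Lambda_d^S\}$, which spans $K_d \otimes_\kk \textrm{Frac}(\kk)$ by Proposition \ref{prop:gen}, to a well-defined operator on all of $K$.

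The heart of the argument is step (ii). Applying $P^+ Q^+$ to $\Lambda_d^S$ gives an iterated integral: $Q^+$ raises the degree by $l$ (integration variables $v_1,\dots,v_l$) and replaces $\CT_d$ by $\CT_{d+l}$; then $P^+$ raises by a further $k$ (variables $u_1,\dots,u_k$). The key computation is that extending the flag by one sheaf — equivalently, the relation $[\CT_{d+1}] = [\CT_d] + [\CL_1]$ on the correspondence $\fZ_{d,d+1}$, together with its consequence for $\CW$ via \eqref{eqn:gamma} — multiplies $\Lambda(\CW_d, u)$ by exactly an $\omega$-factor of the type appearing in \eqref{eqn:factor}, which after the $q^\e$-shifts matches the factors $\omega(z_i/z_j)$ of the shuffle product \eqref{eqn:mult}. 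Iterating over the $l$ boxes created by $Q^+$, and identifying their Chern roots with the integration variables $v_j$ (which is precisely what the residue structure of the $Q^+$-integral does), one finds that $\Lambda(\CW_{d+l},\cdot)$ differs from $\Lambda(\CW_d,\cdot)$ by exactly the product of $\omega$-factors occurring in \eqref{eqn:mult}. After merging the contours of the $u$- and $v$-variables, the iterated integral becomes the single symmetrized integral defining $(P*Q)^+ \cdot \Lambda_d^S$; the negative case is the dual statement, which reverses the order. Step (iii) is then immediate: after $P^+$ the tautological bundle acquires the integration variables $u_1,\dots,u_k$ as extra Chern roots, so $[p_n^+, P^+]$ inserts $u_1^n + \cdots + u_k^n$ under the integral, which is \eqref{eqn:comm1}, while $H_0^\pm$ is central because it acts by a scalar.

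The main obstacle is step (iv), the mixed commutator \eqref{eqn:drinfeld}. Both $(z^d)^+ (z^{d'})^-$ and $(z^{d'})^- (z^d)^+$ are compositions of a one-variable raising integral (variable $u$) with a one-variable lowering integral (variable $v$), and in the two orders the contours of $u$ and $v$ lie on opposite sides of one another. Their difference is therefore a sum of residues: the residue at $u = v$ produces the Cartan terms, and one must check that it yields exactly the coefficient $\frac{(1-q_1)(1-q_2)}{1-q}$ and that the operator obtained agrees with the one determined by \eqref{eqn:defp} and the value $H_0^\pm = (-q^\e)^r t_1 \cdots t_r$; meanwhile the residues at $u = 0$ and $u = \infty$ account for the indicators $\delta_{d+d' \geq 0}$ and $\delta_{-d-d' \geq 0}$, since whether those poles contribute depends on the sign of $d+d'$. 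Pinning down this residue bookkeeping — in particular the precise appearance and normalization of the currents $H^\pm$ — is the delicate point; if one prefers to avoid it, it is bypassed entirely by the second route above, once (i) and the matching of the generators with those of \cite{FT, SV} have been established.
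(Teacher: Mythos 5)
Your proposal is consistent with the paper, which in fact offers both of your routes but with the priorities reversed: the paper's actual argument is your ``second, parallel route'' --- it cites \cite{FT} and \cite{SV} for the existence of the action and reduces the identification to checking that the degree-one generators $(z^d)^\pm$, which generate $\CA^\pm$, act by the integral formula \eqref{eqn:formula} --- while the direct verification of the shuffle and Drinfeld relations, which you put first, is exactly what the paper declares a ``straightforward computation'' and leaves to the reader. Two remarks on the comparison. First, the one substantive input the paper supplies for the citation route is that in \cite{FT} and \cite{SV} the generators $(z^d)^\pm$ are defined geometrically by tautological line bundles on the simple correspondences $\fZ_{d,d+1}$, and the identification of that geometric action with \eqref{eqn:formula} is precisely the $k=1$ case of Theorem \ref{thm:main}; your phrase ``a comparison of conventions, the main input being the normalization \eqref{eqn:gamma} of $\CW$'' points at the same computation but does not name it, and to make the matching precise you should route it through Theorem \ref{thm:main} (or redo its $k=1$ push-forward along $\fZ_{d,d+1}$). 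Second, your direct route is an honest outline rather than a proof: the mechanism you describe in step (ii) (the factor $\omega$ arising from $[\CW_{d+1}] = [\CW_d] - (1-q_1)(1-q_2)[\CL]$) is correct and is how the normal-ordering prescription of Subsection \ref{sub:normal} arises, but step (iv) --- which pinched-contour residues contribute to $[(z^d)^+,(z^{d'})^-]$ and why they reproduce the currents $H^\pm$ normalized by $H_0^\pm = (-q^\e)^r t_1\cdots t_r$ via \eqref{eqn:defp} --- is left open, as you acknowledge. Since you bypass that gap by falling back on the citation route, your proposal establishes the theorem to the same standard as the paper does, no more and no less; what each approach buys is that the direct route is self-contained in principle but computation-heavy, while the citation route is short but only as precise as the matching of degree-one generators, which is where the real content (Theorem \ref{thm:main}) lives.
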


\textbf{} \\
To see that Theorem \ref{thm:act} corresponds to the ones in \cite{FT} and \cite{SV}, it is enough to check that their action of the degree one elements $(z^d)^\pm \in \CA^\pm$ on $K$ is given by the above integral formulas. In either of \loccit these elements are defined by tautological line bundles on the correspondences $\fZ_{d,d+1}$, which will be shown to equal \eqref{eqn:formula} in Theorem \ref{thm:main} below. Alternatively, one could prove Theorem \ref{thm:act} directly from the above formulas, by showing that they satisfy the relations in the shuffle algebra and \eqref{eqn:drinfeld}. This is a straightforward computation, and we leave it to the interested reader. \\

\subsection{} 
\label{sub:normal}

We define the above normal ordered integrals to only count the residues at $k-$tuples $(u_1,...,u_k)$ such that:

$$
\begin{cases}
u_i \in \textrm{Poles}(\Lambda(\pm \CW^{\pm},z^{\pm 1} q^\e)) \textrm{ or} \\
u_i = q_1 u_j \textrm{ for some } \pm j > \pm i \textrm{ or } \\
u_i = q_2 u_j \textrm{ for some } \pm j > \pm i 
\end{cases}
$$
To explain this way of counting residues, let us recall Proposition 3.5 of \cite{Shuf}, which states that any shuffle element is a linear combination of shuffle elements of the form:
\begin{equation}
\label{eqn:doth}
z^{n_1} * ... * z^{n_k} = \sym \left[z_1^{n_1} ... z_k^{n_k} \prod_{1\leq i < j \leq k} \omega \left( \frac {z_i}{z_j} \right) \right]
\end{equation}
For such shuffle elements, the above residue count gives rise to the following actual integral for the RHS of \eqref{eqn:formula}:
\begin{equation}
\label{eqn:lala}
(\pm 1)^k \Lambda^S_{d\pm k} \int u_1^{n_1} ... u_k^{n_k} \prod_{1\leq i < j \leq k} \omega \left( \frac {u_i}{u_j} \right) \prod_{i=1}^k \frac {\Lambda(\pm \CW_{d \pm k}^{\pm}, u^{\pm 1}_iq^{\e})}{u_i^{r\e} \prod_{s \in S} \left(1 - \frac s{u_i} \right)^{\pm 1}} Du_i
\end{equation}
where recall that $Du = du/2\pi iu$. The integrals are over contours that separate $S \cup\{0,\infty\}$ from $\text{Poles}(\Lambda(\pm \CW^{\pm},z^{\pm 1}q^\e))$, with $u_1$ being closest/farthest from the latter set depending on whether the sign is $+$ or $-$. It is easy to see that these formulas are actually imposed by iterating \eqref{eqn:formula} $k$ times to compute the action of $z^{n_1} * ... * z^{n_k}$. \footnote{The reason the order of the contours differs in the cases $+$ and $-$ is that the creation shuffle elements $P^+$ satisfy the opposite algebra relations from the annihilation shuffle elements $P^-$} \\



\subsection{} The shuffle algebra formalism allows us write down explicitly many elements of $\CA$, and then the above formulas tell us how they act on $K$. In particular, an important class of elements of $\CA$ that were defined in \cite{Shuf} are:
$$
X_{m} = \sym \left[ \frac {m(z_1,...,z_k)}{\left(1-\frac {z_2q}{z_1} \right) ... \left(1-\frac {z_{k}q}{z_{k-1}} \right)} \prod_{1\leq i<j\leq k} \omega \left( \frac {z_i}{z_j} \right) \right] \in \CA^+
$$
for any Laurent polynomial $m(z_1,...,z_k)$. It is shown in Proposition 6.2 of \cite{Shuf} that $X_{m} \in \CA^+$ for any $m$, and they therefore act on $K$ via Theorem \ref{thm:act} above. In fact, it is easy to see that \eqref{eqn:formula} implies:
\begin{equation}
\label{eqn:formulam}
X^\pm_m \cdot \Lambda_d^S = (\pm 1)^k \Lambda_{d\pm k}^S \int \frac {m(u_1,...,u_k) \prod_{i<j} \omega \left( \frac {u_i}{u_j} \right)}{\left(1-\frac {u_2q}{u_1} \right) ... \left(1-\frac {u_{k}q}{u_{k-1}} \right)}   \prod_{i=1}^k \frac {\Lambda(\CW_{d \pm k}, u^{\pm 1}_i q^\e)}{\prod_{s \in S} \left(1 - \frac s{u_i} \right)^{\pm 1}} Du_i \qquad 
\end{equation}
where the contours separate the sets $S \cup \{0,\infty\}$ from $\pol(\Lambda(\pm \CW^{\pm},z^{\pm 1} q^\e))$, with $u_1$ the contour closest/farthest from the latter set depending on whether the sign is $+$ or $-$. One of the main points of this paper is to give a geometric description of the operator \eqref{eqn:formulam}, and this will be achieved via the moduli space $\fZ^k$ of flags of sheaves: \\

\begin{theorem}
\label{thm:main}

For any Laurent polynomial $m(z_1,...,z_k)$, the geometric correspondence $x^\pm_{m}:K \longrightarrow K$ of \eqref{eqn:opop} is given by the shuffle element:

$$
X^\pm_{m(z_k,...,z_1)\cdot (z_1...z_k)^{r\e}} \in \CA^\pm \longrightarrow \emph{End}(K)
$$ 

\text{}

\end{theorem}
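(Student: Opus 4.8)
The plan is to compare the geometric operator $x^\pm_m$ of \eqref{eqn:opop} with the shuffle operator \eqref{eqn:formulam} by evaluating both on the spanning set of tautological classes $\Lambda_d^S = \prod_{s \in S}\Lambda(\CT_d,s)$, and to reduce everything to iterated application of the pushforward formulas of Lemma \ref{lem:push}. I would first dispose of the case $k=1$: here $\fZ_{d,d+1}$ is the single projective bundle $\BP(\tCW_1^\vee[1])$ over $\CM_d$ (equivalently $\BP(\tCW_2)$ over $\CM_{d+1}$), the class $l=l_1$ equals $\CO_{\pi^+}(-1)=\CO_{\pi^-}(1)$, and Lemma \ref{lem:push} directly turns $p^\pm_*(r(l_1)\cdot p^{\mp*}(\Lambda_d^S))$ into a one-variable contour integral. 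One then checks that, after accounting for how $p^{\mp*}$ acts on $\Lambda_d^S$ (it is essentially unchanged, since the tautological bundle on $\CM_d$ pulls back to $\CT_d$ on $\fZ_{d,d+1}$) and for the projection-formula factor $\prod_{s}(1-s/u)^{\pm1}$ coming from $p^{\pm*}$ versus $p^{\mp*}$, this one-variable integral matches \eqref{eqn:formula} for the degree-one element $(z^{n}(z)^{r\e})^\pm$, which is the $k=1$ case of Theorem \ref{thm:main}. The bookkeeping of the $q^\e$ twists, the $u^{r\e}$ factor, and the reversal $m(z_1,\dots,z_k)\mapsto m(z_k,\dots,z_1)$ in the statement is the fiddly part here but is forced by comparing the two complexes $\tCW_1$ and $\tCW_2$ in \eqref{eqn:w1}--\eqref{eqn:w2} with $\CW$ and $\CW^\vee$ in \eqref{eqn:gamma}.

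Next I would set up the induction on $k$ using the tower structure of Theorem \ref{thm:towers}. The key observation is that the map $p^+:\fZ_{d,d+k}\to\CM_{d+k}$ factors through $\pi^-:\fZ_{d,d+k}\to\fZ_{d+1,d+k}$, and iterating \eqref{eqn:ram2} exhibits $\fZ_{d,d+k}$ as an iterated projective tower over $\CM_{d+k}$ (dually, an iterated tower over $\CM_d$ via the $\pi^+$'s and \eqref{eqn:ram1}). Under this tower the line bundles $\CL_1,\dots,\CL_k$ become the successive Serre sheaves $\CO_{\pi}(\mp1)$ at each stage, by \eqref{eqn:line}. Therefore $p^\pm_*(m(l_1,\dots,l_k)\cdot p^{\mp*}(c))$ can be computed by $k$ successive applications of Lemma \ref{lem:push}, each introducing one new integration variable $u_i$ and one factor $\Lambda(\tCW_j,u_i)$ (or its dual). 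The combinatorial content is that the complex $\tCW_2$ at the $i$-th stage of the tower over $\fZ_{d+1,d+i}$, when written in $K$-theory via \eqref{eqn:gamma}-type identities, produces exactly the factor $\omega(u_i/u_{i'})$ coupling it to the previously integrated variables, plus the factor $\Lambda(\CW_{d\pm k},u_i^{\pm1}q^\e)$ and the "ladder" denominators $(1-u_{i+1}q/u_i)$ appearing in the definition of $X_m$. Concretely I expect: $[\tCW_2]$ on $\fZ_{d+1,d+i}$ differs from $[\CW_{d+i}]$ by terms involving $\CL_1,\dots,\CL_{i-1}$, i.e. by the already-fixed line bundles, and the $\Lambda$-class of those correction terms is what assembles into $\prod_{i'<i}\omega(u_i/u_{i'})$ and the ladder factors.

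The main obstacle — and the step I would spend the most care on — is precisely this last point: showing that the $K$-theory class of $\tCW_2$ restricted to each stratum of the tower produces, upon taking $\Lambda$-classes and multiplying, exactly the kernel $\prod_{i<j}\omega(u_i/u_j)/\prod_i(1-u_{i+1}q/u_i)$ of the shuffle element $X_m$, together with the correct single global factor $\prod_i \Lambda(\CW_{d\pm k},u_i^{\pm1}q^\e)/\prod_{s}(1-s/u_i)^{\pm1}$. This requires a precise identity for $[\tCW_2|_{\fZ_{d+1,d+i}}]$ in terms of $[\CT_{d+i}]$, $[\CT_{d+i-1}]$ and the $q_1,q_2$ twists, analogous to \eqref{eqn:gamma}, and then a careful induction tracking how $\CT_{d+i-1}=\CT_{d+i}-\CL_i$ propagates the dependence on the integration variables. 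The contour prescription also needs attention: Lemma \ref{lem:push} dictates at each stage which poles the new contour separates, and one must check this nests correctly to reproduce the ordering of contours in \eqref{eqn:formulam} (with $u_1$ closest to or farthest from $\pol(\Lambda(\CW,\cdot))$ according to the sign), which is exactly the source of the $m(z_1,\dots,z_k)\mapsto m(z_k,\dots,z_1)$ reversal.

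Finally, once both $x^\pm_m$ and $X^\pm_{m(z_k,\dots,z_1)(z_1\cdots z_k)^{r\e}}$ are shown to act by the same contour integral on every $\Lambda_d^S$, Proposition \ref{prop:gen} (which guarantees the $\Lambda_d^S$ span $K_d$ after inverting $\kk$) lets us conclude the two operators agree on all of $K$, completing the proof. I would also remark that the $P^\pm$ are determined by their matrix coefficients on this spanning set, so no additional continuity or integrality argument is needed beyond Proposition \ref{prop:gen}.
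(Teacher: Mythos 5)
Your proposal follows essentially the same route as the paper's proof: reduce to the spanning classes $\Lambda_d^S$ via Proposition \ref{prop:gen}, re-express the pulled-back tautological classes as in \eqref{eqn:bb}, factor $p^\pm$ through the projective towers of Theorem \ref{thm:towers}, apply Lemma \ref{lem:push} at each stage, and observe that rewriting $\tCW_1,\tCW_2$ in terms of $\CW_{d\pm k}$ produces exactly the $\omega$-factors and ladder denominators, with the final change of variables $v_i=u_{k+1-i}$ accounting for the reversal $m(z_1,\dots,z_k)\mapsto m(z_k,\dots,z_1)$. The only slip is notational: the map $\fZ_{d,d+k}\to\fZ_{d+1,d+k}$ is $\pi^+$ (the projectivization \eqref{eqn:ram1} of $\tCW_1^\vee[1]$), not $\pi^-$, so it is the tower of $\pi^+$'s that sits over $\CM_{d+k}$.
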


\begin{remark}
\label{rem:duh}
Theorem \ref{thm:main} shows us why the geometric correspondences $x_m$ are not simply compositions of simple Nakajima correspondences. If they were, the operators they define on $K-$theory would be shuffle elements of the form \eqref{eqn:doth}, and thus act on $K$ by \eqref{eqn:lala}. These differ from the shuffle elements $X_m$ by the absence of the denominator:
$$
\left(1-\frac {z_2q}{z_1} \right) ... \left(1-\frac {z_{k}q}{z_{k-1}} \right)
$$
In fact, this denominator appears in the shuffle element associated to $x_m^\pm$ because of $\CT_{d+1}$ (respectively $\CT_{d+k-1}$) in the definition of the complexes \eqref{eqn:w1} (respectively \eqref{eqn:w2}) that define the correspondences $x^+_m$ (respectively $x^-_m$). \\
\end{remark}

\begin{proof} We need to compare the geometric operators $x^\pm_m$ with the shuffle elements $X^\pm_m$. By Proposition \ref{prop:gen}, it will be enough to compare their action on products of tautological classes. By \eqref{eqn:opop}, we have:
$$
x^\pm_m \cdot \Lambda^S_d = p^{\pm}_* \left[ m(l_1,...,l_k)\prod_{s\in S} \Lambda(p^{\mp *}(\CT_d),s) \right]
$$
Comparing the various tautological sheaves on the variety $\fZ_{d,d+k}$, we see that:
\begin{equation}
\label{eqn:bb}
\L(p^{\mp *}(\CT_d),s) = \L(p^{\pm *}(\CT_{d\pm k}),s) \prod_{i=1}^k \left(1-\frac s{l_i} \right)^{\mp 1}
\end{equation}
and therefore:
\begin{equation}
\label{eqn:1}
x^\pm_m \cdot \Lambda_d^S = \Lambda_{d\pm k}^S \cdot p^{\pm}_* \left[ m(l_1,...,l_k)\prod_{s\in S} \prod_{i=1}^k \left(1-\frac s{l_i} \right)^{\mp 1} \right] \qquad
\end{equation}
The maps $p^\pm$ factor as:
$$
p^+ : \fZ_{d,d+k} \stackrel{\pi^+_{1}}\longrightarrow \fZ_{d+1,d+k} \stackrel{\pi^+_{2}}\longrightarrow ...  \stackrel{\pi^+_{k-1}}\longrightarrow \fZ_{d+k-1,d+k} \stackrel{\pi^+_{k}}\longrightarrow  \CM_{d+k}
$$

$$
p^- : \fZ_{d-k,d} \stackrel{\pi^-_{k}}\longrightarrow \fZ_{d-k,d-1} \stackrel{\pi^-_{k-1}}\longrightarrow ...  \stackrel{\pi^-_{2}}\longrightarrow \fZ_{d-k,d-k+1} \stackrel{\pi^-_{1}}\longrightarrow  \CM_{d-k}
$$
where the individual maps $\pi_i^\pm$ are the projectivizations that appear in Theorem \ref{thm:towers}. By Lemma \ref{lem:push}, we have for any rational function $r$:
$$
\pi^+_{i*}(r(l_i)) = \int r(u) \Lambda(\tCW_1,u) Du =  \int \frac {r(u) \Lambda(\CW_{d+i},uq)}{\left(1- \frac {uq}{l_{i+1}}\right)}  Du
$$

$$
\pi^-_{i*}(r(l_i)) = - \int r(u) \Lambda(-\tCW^\vee_2, u^{-1}) Du = - \int \frac {r(u) \Lambda(-\CW^\vee_{d+i-k-1},u^{-1})}{\left(1- \frac {l_{i-1}q}{u}\right)} Du
$$
The equalities on the right simply follow from the definitions of $\tCW_1$ and $\tCW_2$, where $\CW_i$ denotes the universal complex \eqref{eqn:complex} on each $\CM_i$. We can use \eqref{eqn:bb} to express these complexes in terms of $\CW_{d+k}$ and $\CW_{d-k}$, respectively:
$$
\pi^+_{i*}(r(l_i)) =  \int \frac {r(u) \Lambda(\CW_{d + k},uq)}{\left(1- \frac {uq}{l_{i+1}}\right)} \prod_{i<j} \omega\left(\frac {l_j}u \right) Du
$$

$$
\pi^-_{i*}(r(l_i)) = - \int \frac {r(u) \Lambda(-\CW^\vee_{d - k},u^{-1})}{\left(1- \frac {l_{i-1}q}{u}\right)} \prod_{j<i} \omega\left(\frac u{l_j} \right) Du
$$
Iterating these integral formulas gives us the following equality for \eqref{eqn:1}:
$$
x^\pm_m \cdot \Lambda_d^S = (\pm 1)^k \Lambda_{d\pm k}^S \cdot \int  \frac {m(u_1,...,u_k) \prod_{i<j}  \omega\left(\frac {u_j}{u_i} \right) }{\left(1-\frac {u_1q}{u_2}\right)...\left(1-\frac {u_{k-1}q}{u_k} \right)} \prod_{i=1}^k \frac {\Lambda (\pm \CW_{d\pm k}^{\pm}, u^{\pm 1} q^{\e})}{\prod_{s\in S} \left(1-\frac s{u_i} \right)^{\pm 1}} Du_i
$$
where the contours separate $S \cup \{0,\infty\}$ from $\pol(\Lambda(\pm \CW,\cdot))$, with $u_1$ farthest/closest to the latter set, depending on whether the sign is $+$ or $-$. Changing the variables to $v_i = u_{k+1-i}$ gives us precisely \eqref{eqn:formulam}. 

\end{proof}

\subsection{} In \cite{Shuf}, we define explicit shuffle elements $P_{k,d} \in \CA$ for all $k,d \in \BZ^2 \backslash (0,0)$. It is shown in \loccit that they are the images of the elliptic Hall algebra generators studied in \cite{BS}, and are permuted by an (almost) $\slz$ action of automorphisms on $\CA$. To write them out explicitly, let us write $n = \gcd(k,d)$ and $a = \frac kn$. Then Theorem 1.1 of \cite{Shuf} states that \footnote{Note that our $P_{k,d}$ and those of \loccit are off by a constant of $\frac {(q_1-1)^k(1-q_2)^k}{(q_1^{n}-1)(1-q_2^{n})}$. This is done in the present paper purely for cosmetic reasons}:
\begin{equation}
\label{eqn:heis}
P_{\pm k, d} =  X^\pm_{m_{k, d}}
\end{equation}
for all $k>0$, where the Laurent polynomial $m_{k, d}$ is given by:
$$
m_{k, d}(z_1,...,z_k) = {\prod_{i=1}^k z_i^{\left \lfloor \frac {id}k \right \rfloor - \left \lfloor \frac {(i-1)d}k \right \rfloor}\sum_{x=0}^{n-1} q^{x} \frac {z_{a(n-1)+1}...z_{a(n-x)+1}}{{z_{a(n-1)} ...z_{a(n-x)}}}}
$$
When $k=0$, the corresponding elements coincide with the Cartan generators of Subsection \ref{sub:cartan}: $P_{0,\pm d} = p^{\pm}_d$ for any $d>0$. \\



\subsection{} 
\label{sub:bara}

For each pair of coprime natural numbers $\gcd(a,b) = 1$, the elements $P_{na,nb}$ determine a $q-$Heisenberg algebra:
$$
[P_{na,nb}, P_{n'a,n'b}] =  \frac {\delta_{n+n'}^0   q^{-na}(q_1^n-1)(q^n_2-1)}{(q^{-n}-1)} \left[ (H_0^-)^{na} - (H_0^+)^{na} \right]
$$
for all $n>0$ and all $n'\in \BZ$. In particular, $\{P_{n,0}\}_{n\in \BZ}$ will determine a geometric action of the $q-$Heisenberg algebra on $K$ that deforms the construction of Baranovsky in cohomology. This follows from Theorem \ref{thm:main}, which implies that the $K-$theoretic version of Baranovsky's operators are:
\begin{equation}
\label{eqn:bar}
P_{\pm k,0}(c) = p^\pm_*\left[(l_1...l_k)^{-r\e} \sum_{i=1}^{k} q^{i-1} \frac {l_1}{l_{i}} \cdot p^{\mp *}(c)\right], \ \forall c\in K
\end{equation}
for all $k>0$, where $p^\pm:\fZ^k \longrightarrow \CM$ are the projections \eqref{eqn:projmaps2} from the moduli of flags of sheaves $\fZ^k = \sqcup_d \fZ_{d,d+k}$. A similar geometric result holds for all $P_{\pm k, d}$, simply by replacing the sum of monomials in the $l_i$ by $m_{k,d}(l_k,...,l_1)\cdot (l_1...l_k)^{-r\e}$.  \\

\subsection{} At the suggestion of Boris Feigin, we will prove the following result: \\

\begin{proposition}
\label{prop:feigin} The vector $v = \sum_{d\geq 0} 1_{d} \in K$ is an eigenvector of all $P_{k,d}$ corresponding to lattice points $(k,d)$ in a certain cone:
\begin{equation}
\label{eqn:boris}
P_{-k,0} \cdot v = v,
\end{equation}

\begin{equation}
\label{eqn:borya}
P_{-k,d} \cdot v = 0,
\end{equation}
for all $-kr < d < 0$, and:
\begin{equation}
\label{eqn:bor}
P_{-k,-kr} \cdot v = \left[ (-1)^{r} t_1...t_r\right]^{-k} \cdot v
\end{equation}
\textbf{} \\
\end{proposition}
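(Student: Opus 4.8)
The plan is to use the geometric description of the operators $P_{-k,d}$ furnished by Theorem \ref{thm:main} together with the explicit integral formula \eqref{eqn:formula}, and evaluate everything on the class $v = \sum_d 1_d$, where $1_d \in K_d$ is the structure sheaf of $\CM_d$ (in particular $1_0$ is the class of the point $\CM_0$). Since $P_{-k,d} = X^-_{m_{k,d}}$ by \eqref{eqn:heis}, and the negative operators lower the second Chern class, acting on $1_0 \in K_0$ gives zero for trivial reasons; the interesting contributions all come from acting on $1_{kr}$, $1_{d'}$ with $d' = kr$ or similar, landing back in the lower components. More precisely, I would first reduce to computing $P_{-k,d}\cdot 1_{d_0}$ for each $d_0 \ge k$, landing in $K_{d_0 - k}$, and track the total over $d_0$. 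The key simplification is that on the ``empty'' sheaf — i.e. when we take $S = \emptyset$ so that $\Lambda^S_{d} = 1$ — formula \eqref{eqn:formula} for $P^- = X^-_{m_{k,d}}$ becomes an integral with integrand built only from $\Lambda(-\CW^\vee_{d_0-k}, u_i^{-1}q^\e)$, the monomial factors $m_{k,d}(u)\cdot(u_1\cdots u_k)^{r\e}$, the $\omega$-factors, and the denominators $(1 - u_{i+1}q/u_i)$; there are no poles coming from $S$, so the contour only needs to separate $\{0,\infty\}$ from $\pol(\Lambda(-\CW^\vee,u^{-1}q^\e))$.

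The second step is to compute $[\CW_{d_0-k}]$ on the relevant component via \eqref{eqn:gamma}: $[\CW_{d'}] = \sum_k t_k^{-1} - (1-q_1)(1-q_2)[\CT_{d'}]$, so $\Lambda(-\CW^\vee_{d'}, w)$ is a ratio of the framing contributions $\prod_j(1 - t_j w)^{\pm1}$ against a factor involving the Chern roots of $\CT_{d'}$. Evaluating on $1_{d'}$ and pushing to cohomology/$K$-theory of the base, the tautological classes drop out in the sense that $1_{d'}$ is the fundamental class, so effectively we are integrating a rational function whose only genuine poles (inside the contour) come from the $k$ framing factors $(1 - t_j u_i q^\e)$ together with the ``chain'' denominators. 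I would then do the $u_1, \dots, u_k$ residues iteratively — exactly as in the derivation of \eqref{eqn:lala} — using the nested contour ordering for the minus sign, so that $u_1$ is picked first at poles coming from $\Lambda(-\CW^\vee,\cdot)$, etc. The degree bookkeeping is where the cone condition $-kr < d < 0$ enters: the exponent of $u_i$ in $m_{k,d}$ is $\lfloor id/k\rfloor - \lfloor(i-1)d/k\rfloor \in \{-r, \dots, 0\}$-ish after combining with the shift by $r\e = 0$ here (since $\e = 0$ for the minus sign? — careful: $\e=1$ for $\pm$, so for $P^-$ we have the sign $-$, giving $\e = 0$; hence $q^\e = 1$ and the $u_i^{r\e}$ factors are trivial), and one checks the total degree forces the residue at infinity / at zero to vanish unless $d = 0$ or $d = -kr$.

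The three cases then come out as follows. For $-kr < d < 0$ the contour integral has integrand decaying strictly at both $0$ and $\infty$ after the residues are taken, so the total residue inside the contour is zero, giving \eqref{eqn:borya}. For $d = 0$, the monomial $m_{k,0}(z) = \sum_{x=0}^{k-1} q^x z_1/\cdots$ has total degree $0$, the integral picks up exactly the residue that reproduces $1_{d_0-k}$ with coefficient $1$ after the $\omega$ and chain factors telescope — this is essentially the statement that $P_{-k,0}$ acts as (a normalization of) the Heisenberg annihilation operator and annihilates nothing on the ``coherent'' vector; summing over $d_0$ gives $P_{-k,0}v = v$, which is \eqref{eqn:boris}. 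For $d = -kr$ the monomial has total degree $-kr$, and the only surviving residue is the ``boundary'' one at the framing poles $u_i = t_{\sigma(i)}^{-1}$ (or their $q$-shifts), whose evaluation contributes the product $\prod_i t_i^{-1}$ raised to the appropriate power, together with a sign $(-1)^r$ per ``strand'', yielding $[(-1)^r t_1\cdots t_r]^{-k}$; this is \eqref{eqn:bor}.

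The main obstacle I anticipate is the careful residue computation in the $d = -kr$ case: one must show that among all the ways of distributing the $k$ contour integrals over the poles of $\Lambda(-\CW^\vee,\cdot)$ (framing poles, $q$-shifted tautological poles, and the $u_i = q_1 u_j$, $u_i = q_2 u_j$ poles mandated by the normal-ordering prescription of Subsection \ref{sub:normal}), only the single configuration where each $u_i$ sits at a distinct framing pole $t_j^{-1}$ survives, and that the combinatorial sum over the $x$-index in $m_{k,-kr}$ and over permutations collapses to the clean answer $[(-1)^r t_1\cdots t_r]^{-k}$. This is a finite but delicate computation; the $-kr < d < 0$ vanishing and the $d=0$ eigenvalue are comparatively routine degree counts once the setup is in place.
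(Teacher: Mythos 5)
Your overall strategy --- specialize \eqref{eqn:formulam} to $S=\emptyset$, note that $\e=0$ for the minus sign so the factors $q^\e$ and $u_i^{r\e}$ disappear, and decide each $u_i$-integral by the behaviour of the integrand at $0$ and $\infty$ --- is exactly the paper's strategy, and your treatment of \eqref{eqn:boris} and of the vanishing range $-kr<d<0$ matches the argument given there: the exponent of $u_1$ in $m_{k,d}$ lies in $\{-1,\dots,-r\}$ while $\Lambda(-\CW^\vee,u_1^{-1})$ vanishes to order $r$ at $u_1=0$, so there is no pole at $0$; at $\infty$ at most a simple pole survives, and iterating over $u_2,u_3,\dots$ eventually kills the integral when $-kr<d$, while for $d=0$ each residue at $\infty$ reproduces the same integral with one fewer variable.

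However, two of your supporting claims are incorrect, and one of them sinks your plan for \eqref{eqn:bor}. First, it is not true that on $1_{d_0}$ ``the tautological classes drop out'' so that the only poles of $\Lambda(-\CW^\vee_{d_0-k},u^{-1})$ are the $r$ framing factors: by \eqref{eqn:gamma} this class involves the Chern roots of $\CT_{d_0-k}$ in an essential way (at a fixed point its poles sit at all the inner corners of the partition, cf.\ the proof of Proposition \ref{prop:zhenya}), and $1_{d_0}$ being the structure sheaf does not remove them. The point of the paper's argument is precisely to never touch these poles: the contours are deformed onto small loops around $0$ and $\infty$, where only the virtual rank $r$ of $\CW$ and the constants $t_1,\dots,t_r$ are visible. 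Second, for $d=-kr$ your proposed residue configuration --- ``each $u_i$ sits at a distinct framing pole $t_{\sigma(i)}^{-1}$'' --- cannot be right even combinatorially, since there are $k$ integration variables and $r$ framing characters with $k$ and $r$ unrelated; moreover the poles of $\Lambda(-\CW^\vee,\cdot)$ are not only the framing ones. In the paper the relevant residue for each variable is at $u_i=0$, where $u_i^{-r}\Lambda(-\CW^\vee,u_i^{-1})$ has a finite nonzero limit built out of all $r$ framing characters simultaneously (the tautological contributions cancel between numerator and denominator), and taking this residue $k$ times yields $\left[(-1)^rt_1\cdots t_r\right]^{-k}$ at once. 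So the step you single out as ``the main obstacle'' is in fact a one-line evaluation once the contour sits at $0$, whereas the route you propose for it --- residues at individual framing poles with a sum over configurations --- would not close.
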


\begin{proof} Formulas \eqref{eqn:formulam} and \eqref{eqn:heis} imply that $P_{-k,0} \cdot 1_d$ equals:
$$
(-1)^k \int  \frac {1 + \frac {u_kq}{u_{k-1}}+...+\frac {u_kq^{k-1}}{u_1}}{\left(1-\frac {u_2q}{u_1} \right) ... \left(1-\frac {u_{k}q}{u_{k-1}} \right)} \prod_{1\leq i<j\leq k} \omega \left( \frac {u_i}{u_j} \right) \prod_{i=1}^k \Lambda(- \CW^\vee_{d - k}, u_i^{-1}) Du_i
$$
where the contours go around the poles of the rational function $\Lambda(-\CW^\vee,z^{-1})$, with $u_1$ being the outermost one. We can deform the contours to small loops around $0$ and $\infty$, with $u_1$ being the closest to the these poles. There is no pole at 0 in $u_1$ (because the function $\Lambda(-\CW^\vee,z^{-1})$ vanishes to order $r$), but there is a simple pole at $\infty$ in $u_1$, because of the first fraction. The residue equals:
$$
(-1)^{k-1} \int  \frac {1 + \frac {u_kq}{u_{k-1}}+...+\frac {u_kq^{k-2}}{u_2}}{\left(1-\frac {u_3q}{u_2} \right) ... \left(1-\frac {u_{k}q}{u_{k-1}} \right)} \prod_{2\leq i<j\leq k} \omega \left( \frac {u_i}{u_j} \right) \prod_{i=1}^k \Lambda(- \CW^\vee_{d - k}, u_i^{-1}) Du_i
$$
We can integrate over $u_2,u_3,...$ in the same way, and the result yields \eqref{eqn:boris}.

\textbf{} \\
For any $-kr<d<0$, formula \eqref{eqn:formulam} implies that $P_{-k,d} \cdot 1_d$ equals:
$$
(-1)^k \int \frac {m_{k,d}(u_1,...,u_k)}{\left(1-\frac {u_2q}{u_1} \right) ... \left(1-\frac {u_{k}q}{u_{k-1}} \right)} \prod_{1\leq i<j\leq k} \omega \left( \frac {u_i}{u_j} \right) \prod_{i=1}^k    \Lambda(- \CW^\vee_{d - k}, u_i^{-1}) Du_i
$$
where by \eqref{eqn:heis} and $-kr<d<0$, the monomial $m_{k,d}$ only has terms of degree $\{-1,...,-r\}$ in $u_1$. Hence the above integral is regular at $u_1 = 0$, and it has at most a single pole at $u_1=\infty$, but only if it has degree exactly $r$ in $m_{k,d}$. If this happens, the corresponding residue is:
$$
(-1)^{k-1} \int \frac {m'(u_2,...,u_k)}{\left(1-\frac {u_3q}{u_2} \right) ... \left(1-\frac {u_{k}q}{u_{k-1}} \right)} \prod_{2\leq i<j\leq k} \omega \left( \frac {u_i}{u_j} \right) \prod_{2=1}^k    \Lambda(- \CW^\vee_{d - k}, u_i^{-1}) Du_i
$$
where the monomial $m'$ only has terms of degree $\{-1,...,-r\}$ in $u_2$. We may repeat the same argument, and this procedure will eventually end with 0, because the hypothesis $-kr<d$ ensures that at some step, the monomial in the numerator will not have degree $-r$ in the variable that needs integrating.

\textbf{} \\
Formulas \eqref{eqn:formulam} and \eqref{eqn:heis} imply that $P_{-k,-kr}\cdot 1_d$ equals:
$$
(-1)^k \int  \frac {1 + \frac {u_kq}{u_{k-1}}+...+\frac {u_kq^{k-1}}{u_1}}{\left(1-\frac {u_2q}{u_1} \right) ... \left(1-\frac {u_{k}q}{u_{k-1}} \right)} \prod_{1\leq i<j\leq k} \omega \left( \frac {u_i}{u_j} \right) \prod_{i=1}^n   u_i^{-r} \Lambda(- \CW^\vee_{d - k}, u_i^{-1}) Du_i
$$
where the contours go around the poles of the rational function $\Lambda(- \CW^\vee,z^{-1})$, with $u_1$ being the outermost one. We can deform the contours to small loops around $0$ and $\infty$, with $u_1$ being the closest to the these poles. There is no residue at $\infty$ in $u_1$, because of $u_1^{-r}$. As for the residue at $0$ in $u_1$, it equals:
$$
(-1)^{k-1} \int  \frac {(-1)^{r}t^{-1}_1...t^{-1}_r \cdot \frac{u_kq^{k-2}}{u_2}}{\left(1-\frac {u_3q}{u_2} \right) ... \left(1-\frac {u_{k}q}{u_{k-1}} \right)} \prod_{2\leq i<j\leq k} \omega \left( \frac {u_i}{u_j} \right) \prod_{i=2}^{k}   u_i^{-r} \Lambda(- \CW_{d - k}^\vee, u_i^{-1}) Du_i
$$
We can now integrate over $u_2,u_3,...$ in the same way, and the result yields \eqref{eqn:bor}.

\end{proof}

\section{Fixed points}
\label{sec:fixed}

\subsection{} 
\label{sub:part}

In this section, we will use the language of partitions $\lambda = (\lambda_0 \geq \lambda_1 \geq ...)$. To any such partition, we can associate its Young diagram, which is a collection of lattice squares in the first quadrant. For example, the following is the Young diagram of the partition $(4,3,1)$:

\begin{picture}(100,160)(-90,-15)
\label{fig}


\put(0,0){\line(1,0){160}}
\put(0,40){\line(1,0){160}}
\put(0,80){\line(1,0){120}}
\put(0,120){\line(1,0){40}}

\put(0,0){\line(0,1){120}}
\put(40,0){\line(0,1){120}}
\put(80,0){\line(0,1){80}}
\put(120,0){\line(0,1){80}}
\put(160,0){\line(0,1){40}}

\put(160,40){\circle*{5}}
\put(120,80){\circle*{5}}
\put(40,120){\circle*{5}}

\put(160,0){\circle{5}}
\put(120,40){\circle{5}}
\put(40,80){\circle{5}}
\put(0,120){\circle{5}}

\put(162,3){\scriptsize{$(4,0)$}}
\put(162,43){\scriptsize{$(4,1)$}}
\put(122,43){\scriptsize{$(3,1)$}}
\put(122,83){\scriptsize{$(3,2)$}}
\put(42,83){\scriptsize{$(1,2)$}}
\put(42,123){\scriptsize{$(1,3)$}}
\put(2,123){\scriptsize{$(0,3)$}}

\put(65,-20){\mbox{Figure \ref{fig}}}

\end{picture}

\text{}\\
The hollow circles indicate the \textbf{inner} corners of the partition, while the solid circles indicate the \textbf{outer} corners. Given two partitions, we will write $\lambda \leq \mu$ if the Young diagram of $\lambda$ is completely contained in that of $\mu$. A standard Young tableau (denoted by $\syt$, plural $\sytx$) of shape $\mu - \lambda$ is an arrangement of the numbers $1,2,...,k$ in the boxes of $\mu - \lambda$, in such a way that the numbers decrease as we go up or to the right. There is a bijection between $\sytx$ and all collections of intermediary partitions:
$$
\lambda = \rho_k \leq \rho_{k-1} \leq... \leq \rho_{1} \leq \rho_{0} = \mu
$$
such that each partition $\rho_i$ has size one more than $\rho_{i+1}$. \\

\subsection{} 
\label{sub:fixy}

Formulas \eqref{eqn:formulam} are given in terms of tautological classes, but they can also be expressed in terms of torus fixed points. In our case, the torus action of $T$ on the smooth variety $\CM_d$ has finitely many fixed points, and these determine a linear basis of the $K-$theory group $K_d$. Namely, we have the following \textbf{localization formula}:
\begin{equation}
\label{eqn:local}
c = \sum_{p \in \CM_d^T} [p] \cdot \frac {c|_p}{\Lambda(T_p\CM_d,1)}, \qquad \forall c\in K_d
\end{equation}
where $[p]$ denotes the class of the skyscraper sheaf above the point $p$. Therefore, the classes $[p]$ form a basis for the $K-$theory group, after tensoring it with $\text{Frac}(\kko)$. The torus fixed points of $\CM_d$ are indexed by $r-$tuples of partitions $\lambda =(\lambda^1,...,\lambda^r)$ whose sizes sum up $d$, and are given by:
$$
\CI_\lambda := \CI_{\lambda^1} \oplus ... \oplus \CI_{\lambda^r}
$$ 
In the above, for any partition $\lambda^i=(\lambda^i_0 \geq \lambda^i_1 \geq ...)$, we consider the monomial ideal $\CI_{\lambda^i} = (x^{\lambda^i_0}y^0, x^{\lambda^i_1}y^1,...)$. As seen in \eqref{eqn:local}, the following constants will be important:
\begin{equation}
\label{eqn:defg}
g_\lambda = \Lambda(T_\lambda \CM_d,1) \in \kk
\end{equation}
While there are many explicit formulas for the character of $T$ in the tangent space to $\CM_d$, and hence also for $g_\lambda$, these constants have an important combinatorial meaning. \\

\subsection{} 
\label{sub:gen}

We will often apply the language of partitions to $r-$tuples of partitions. Namely, a square or corner in an $r-$tuple will simply be a square or corner in one of its constituent partitions. For $r-$tuples of partitions $\lambda$ and $\mu$, a SYT of shape $\mu-\lambda$ is a way to fill the boxes of this $r-$tuple of skew diagrams with the numbers $1,...,k$, in such a way that the numbers decrease as we go up or to the right. Given an $r-$tuple of partitions $\lambda$, the \textbf{weight} of a square with lower left corner $(i,j)$ is defined to be:
$$
\chi(\square) = q_1^iq_2^jt_k^{-1}
$$ 
where $k\in \{1,...,r\}$ indicates which partition the square lies in. The character of $T$ acting in the fibers of the tautological bundle $\CT$ is given in terms of these weights: 
\begin{equation}
\label{eqn:aaron}
\textrm{char}_\tT(\CT|_\lambda) = \sum_{\square \in \lambda} \chi(\square) 
\end{equation}
where the sum goes over all the boxes in the $r$ constituent partitions of $\lambda$. Therefore, the character in the fibers of $\CW$ is:
$$
\textrm{char}_\tT(\CW|_\lambda) = \sum_{k=1}^r t_k^{-1} - (1-q_1)(1-q_2) \sum_{\square \in \lambda} \chi(\square)  =
$$

\begin{equation}
\label{eqn:nu}
= \sum^{\square \textrm{ inner}}_{\textrm{corner of }\lambda} \chi(\square) - \sum^{\square \textrm{ outer}}_{\textrm{corner of }\lambda} \chi(\square)
\end{equation}

\text{} \\

\begin{proof} \textbf{of Proposition \ref{prop:gen}:} Note from \eqref{eqn:aaron} that the class $[\CT]$ of the tautological bundle has different restrictions to all torus fixed points. Since the Vandermonde determinant is non-zero, the class $[\lambda]$ of any torus fixed point can be written as a combination of the powers of $[\CT]$. Then \eqref{eqn:local} implies that any class in $K_d$ can be written as a combination of these powers. 

\end{proof}

\subsection{} In the proof of Theorem \ref{thm:main}, we have showed how the shuffle elements $X_m^\pm$ (or alternatively, the geometric correspondences $x^\pm_m$) act on tautological classes $\Lambda_d^S$. In this section, we will rephrase those computations in the basis of torus fixed points, which will give rise to a new interpretation of formulas \eqref{eqn:formulam}. \\

\begin{proposition}
\label{prop:zhenya}



For any Laurent polynomial $m$, the matrix coefficients of the operators $X_m^\pm$ in the torus fixed point basis $[\lambda]$ are given by:
$$
\langle \mu | X^\pm_m | \lambda \rangle = \frac {g_\lambda}{g_\mu} \sum^{\emph{SYT}}_{\emph{shape } \pm\mu \mp \lambda}   \frac {m(\chi_1,...,\chi_k) \prod_{i<j} \omega \left( \frac {\chi_i}{\chi_j} \right)}{\left(1-\frac {\chi_2q}{\chi_1} \right) ... \left(1-\frac {\chi_{k}q}{\chi_{k-1}} \right)}   \prod_{i=1}^k \chi_i^{-r\e} \Lambda(\pm \CW|_\mu^\pm, \chi^{\pm 1}_iq^\e)
$$
where $\chi_1,...,\chi_k$ denote the weights of the squares labeled $1,2,...,k$ inside each standard Young tableau. \\


\end{proposition}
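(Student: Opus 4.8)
The plan is to evaluate the integral formula \eqref{eqn:formulam} for $X_m^\pm \cdot \Lambda_d^S$ via iterated residues, and then to re-expand the answer in the fixed-point basis using the localization formula \eqref{eqn:local}. First I would take the generating-function approach: since by Proposition \ref{prop:gen} the classes $\Lambda_d^S$ span $K_d$ after localization, it suffices to compute $\langle \mu | X_m^\pm | \lambda \rangle$ by pairing both sides of \eqref{eqn:formulam} against the fixed point $[\mu]$ and extracting the coefficient of $[\lambda]$. Restricting \eqref{eqn:formulam} to the fixed point $\mu$ replaces $\Lambda(\pm\CW_{d\pm k}^\pm, \cdot)$ by $\Lambda(\pm\CW|_\mu^\pm, \cdot)$, whose poles — by the corner description \eqref{eqn:nu} — are exactly the weights $\chi(\square)$ of the \emph{outer} corners of $\mu$ (for the $+$ case; inner corners for $-$), since $\Lambda(\CW|_\mu, u q^\e)$ has a factor $1 - \chi(\square)/(uq^\e)$ for each such corner. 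So the contour integral $\int \cdots Du_i$, which separates $S \cup \{0,\infty\}$ from these poles, picks up exactly the residues at $u_i$ equal to a corner weight (times $q^{-\e}$, absorbing the $q^\e$ shift).

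The combinatorial heart of the argument is then to show that the iterated residue, taken in the prescribed order $u_1, \dots, u_k$ with $u_1$ closest to the pole set, organizes itself into a sum over SYT of shape $\pm\mu\mp\lambda$. Concretely: the residue in $u_1$ at an outer corner weight $\chi$ of $\mu$ corresponds to adding (or, in the $-$ case, removing) that box; the factor $\prod_{i<j}\omega(\chi_i/\chi_j)$ and the $\omega$-shifted poles $u_i = q_1 u_j$ or $u_i = q_2 u_j$ have residues that vanish unless the boxes labelled $1,\dots,k$ form a valid skew shape with decreasing entries — this is precisely the wheel-condition / normal-ordering mechanism already invoked in Subsection \ref{sub:normal}. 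The surviving residues are indexed by chains $\mu = \rho_0 \leq \rho_1 \leq \dots \leq \rho_k = \lambda$ (resp. $\lambda = \rho_k \leq \dots \leq \rho_0 = \mu$), which by Subsection \ref{sub:part} are the same data as SYT. Along the way the denominator $\left(1-\frac{u_2 q}{u_1}\right)\cdots\left(1-\frac{u_k q}{u_{k-1}}\right)$ survives the residue evaluation intact as $\left(1-\frac{\chi_2 q}{\chi_1}\right)\cdots$, since its poles at $u_{i+1}=u_i q$ are of the wrong type to be enclosed.

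Finally I would account for the prefactor $g_\lambda / g_\mu$. This comes from comparing \eqref{eqn:local} applied to the source and target: $x_m^\pm$ acting on $[\lambda] = \Lambda(T_\lambda \CM, 1)^{-1}(\text{combination of }\Lambda_d^S)$ produces, after the residue computation, a combination of $[\mu]$, and rewriting the tautological classes $\Lambda^S_{d\pm k}|_\mu$ via $\prod_{s}\Lambda(\CT|_\mu, s)$ contributes the factor involving $g_\mu = \Lambda(T_\mu\CM, 1)$; the $g_\lambda$ appears because the residue in the fixed-point expansion of the input class $[\lambda]$ carries the weight $1/g_\lambda$, which gets inverted when passing back from the $\Lambda^S$ basis. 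The main obstacle is the bookkeeping in the second paragraph: verifying rigorously that the iterated residues in the stated order produce \emph{exactly} the SYT (no spurious contributions from the $\omega$-poles, correct signs $(\pm 1)^k$, correct matching of box-addition order with the contour order $u_1$ outermost/innermost for $\pm$), and tracking that each residue at a corner weight contributes the clean multiplicative factor $\Lambda(\pm\CW|_\mu^\pm, \chi_i^{\pm 1}q^\e)$ rather than something depending on the intermediate partition $\rho_i$. I expect this to follow from the corner-recursion \eqref{eqn:nu} for $\CW$ restricted to fixed points together with the contour conventions of Subsection \ref{sub:normal}, but it is the step requiring genuine care.
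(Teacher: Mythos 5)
Your proposal follows essentially the same route as the paper: expand $\Lambda_d^S$ in fixed points via \eqref{eqn:local}, evaluate the integral \eqref{eqn:formulam} by iterated residues at the corner weights of $\mu$ (and at the $q_1,q_2$-shifted poles coming from $\omega$), identify the resulting chains of intermediate partitions with standard Young tableaux, and read off the prefactor $g_\lambda/g_\mu$ from the localization weights. The only caveats are cosmetic: in the $+$ case the residues at outer corners of $\mu$ correspond to \emph{removing} boxes from $\mu$ down to $\lambda$ (your ``adding'' and the orientation of your first chain of $\rho_i$'s are flipped), and the vanishing of spurious residues is handled in the paper by the numerator of $\omega$ and the partition-shape constraint rather than by the wheel conditions per se.
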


\begin{remark}
\label{rem:rem}  Note that each summand in the RHS of the above expressions has precisely $k$ linear factors which vanish in the denominator. These factors have to be removed in order for the corresponding summand to make sense (alternatively, one can multiply the RHS by $0^k$). The reason for this will be apparent in the subsequent proof, which will compute the integrals \eqref{eqn:formulam} via residues. Essentially, we will repeatedly use identities of the form:
\begin{equation}
\label{eqn:onegin}
\int \frac {\prod_i (u-a_i)}{\prod_j (u-b_j)}Du = \sum_k \frac {\prod_i (b_k-a_i)}{\prod_{j\neq k} (b_k-b_j)}
\end{equation}
Under this analogy, the quantity displayed in the RHS of Proposition \ref{prop:zhenya} would be:
$$
\sum_k \frac {\prod_i (b_k-a_i)}{\prod_{\text{all } j} (b_k-b_j)}
$$
so the terms $b_k-b_k$ need to be removed from the denominators in order to obtain the correct RHS of \eqref{eqn:onegin}. \\

\end{remark}

\begin{proof} The localization formula \eqref{eqn:local} implies that:
$$
\L_d^S = \sum_{\lambda \vdash d} \frac {[\lambda]}{g_\lambda} \prod_{s\in S}\prod_{\square \in \lambda} \left(1- \frac {s}{\chi(\square)} \right)
$$
\footnote{The notation $\lambda \vdash d$ means that $\lambda$ is an $r-$tuple of partitions whose sizes add up to $d$} and therefore \eqref{eqn:formulam} gives:
$$
X^\pm \cdot \Lambda^S_d =  \sum_{\lambda \vdash d} \frac {X^\pm_m \cdot [\lambda] }{g_\lambda}  \prod_{s\in S} \prod_{\square \in \lambda} \left(1- \frac {s}{\chi(\square)} \right) =  \sum_{\mu \vdash d \pm k} \frac {[\mu] }{g_\mu}  \prod_{s\in S} \prod_{\square \in \mu} \left(1- \frac { s}{\chi(\square)} \right)
$$

\begin{equation}
\label{eqn:integral}
(\pm 1)^k  \int \frac {m(u_1,...,u_k) \prod_{i<j} \omega \left( \frac {u_i}{u_j} \right)}{\left(1-\frac {u_2q}{u_1} \right) ... \left(1-\frac {u_{k}q}{u_{k-1}} \right)}   \prod_{i=1}^k \left[ \frac {\Lambda(\pm \CW|^{\pm}_\mu, u^{\pm 1}_iq^\e)}{u_i^{r\e}\prod_{s \in S} \left(1 - \frac s{u_i} \right)^{\pm 1}} Du_i \right] 
\end{equation}
with $u_1$/$u_k$ being closest to the set of poles of $\Lambda(\pm \CW|^{\pm}_\mu, z^{\pm 1}q^\e)$, depending on whether the sign is $+$ or $-$. Let us compute the above integral by summing over the residues at these poles. By looking at \eqref{eqn:nu}, we see that:
$$
\Lambda(\CW|_\mu, uq) = \frac {\prod^{\square \textrm{ inner}}_{\textrm{corner of }\mu} \left(1 - \frac {uq}{\chi(\square)} \right)}{ \prod^{\square \textrm{ outer}}_{\textrm{corner of }\mu} \left(1 - \frac {uq}{\chi(\square)}\right)}, \quad \Lambda(-\CW|^\vee_\mu, u^{-1}) = \frac { \prod^{\square \textrm{ outer}}_{\textrm{corner of }\mu} \left(1 - \frac {\chi(\square)}u\right)}{\prod^{\square \textrm{ inner}}_{\textrm{corner of }\mu} \left(1 - \frac {\chi(\square)}u \right)}
$$
When the sign is $+$ and we integrate over $u_1$, we pick up a residue whenever $u_1q$ equals the weight of some outer corner of the partition $\mu$. When the sign is $-$ and we integrate over $u_k$, we pick up a residue whenever $u_k$ equals the weight of some inner corner of the partition $\mu$. If the sign is $+$ (respectively $-$), let $\square_1$ (respectively $\square_k$) be the square whose weight is $\chi_1:=\chi(\square_1) = u_1$ (respectively $\chi_k:=\chi(\square_k) = u_k$). We may remove (when the sign is $+$) or add (when the sign is $-$) this square from/to the partition, and then $\rho_{1} = \mu - \square_1$ (respectively $\rho_{k-1} = \mu+\square_k$) is a partition in its own right. The integral in \eqref{eqn:integral} then becomes: 
$$
(\pm 1)^{k-1}  \sum^{\rho_{1} \leq \mu \text{ or}}_{\rho_{k-1} \geq \mu}  \int \frac {m(u_1,...,u_k) \prod_{i<j} \omega \left( \frac {u_i}{u_j} \right)}{\left(1-\frac {u_2q}{u_1} \right) ... \left(1-\frac {u_{k}q}{u_{k-1}} \right)}   \prod_{i=1}^k \left[ \frac {\Lambda(\pm \CW|^\pm_\mu, u_i^{\pm 1}q^\e)}{u_i^{r\e}\prod_{s \in S} \left(1 - \frac s{u_i} \right)^{\pm 1}} Du_i \right] \Big |^{u_1 = \chi_1 \text{ or}}_{u_k = \chi_k}
$$
Now we need to integrate over $u_2$ (respetively $u_{k-1}$). When the sign is $+$, we pick up poles when either $u_2q$ is the weight of some outer square of $\mu$, or $u_2=\chi_1q_1^{-1}$ or $u_2=\chi_1q_1^{-1}$. When the sign is $-$, we pick up poles when either $u_{k-1}$ is the weight of some inner square of $\mu$, or $u_{k-1}=\chi_kq_1^{-1}$ or $u_{k-1}=\chi_k q_1^{-1}$. In either of these cases, note that $u_2=\chi_1$ (respectively $u_{k-1}=\chi_k$) is not a viable option for a pole anymore, because the numerator of $\omega$ eliminates it. If the sign is $+$ (respectively $-$), let $\square_2$ (respectively $\square_{k-1}$) be the square whose weight is $\chi_2:=\chi(\square_2) = u_2$ (respectively $\chi_{k-1}:=\chi(\square_{k-1}) = u_{k-1}$). Note that $\rho_2 = \rho_1 - \square_2$ (respectively $\rho_{k-2} = \rho_{k-1}+\square_{k-1}$) is also a partition. We conclude that the integral \eqref{eqn:integral} equals: 
$$
(\pm 1)^{k-2}  \sum^{\rho_{2} \leq \rho_1 \leq \mu \text{ or}}_{\rho_{k-2} \geq \rho_{k-1} \geq \mu}  \int \frac {m(u_1,...,u_k) \prod_{i<j} \omega \left( \frac {u_i}{u_j} \right)}{\left(1-\frac {u_2q}{u_1} \right) ... \left(1-\frac {u_{k}q}{u_{k-1}} \right)}   \prod_{i=1}^k \left[ \frac {\Lambda(\pm \CW|^\pm_\mu, u^{\pm 1}_iq^\e)}{u_i^{r\e}\prod_{s \in S} \left(1 - \frac s{u_i} \right)^{\pm 1}} Du_i \right]
$$
evaluated at $u_1 = \chi_1, u_2 = \chi_2$ (when the sign is $+$) or  $u_k = \chi_k, u_{k-1}=\chi_{k-1}$ (when the sign is $-$). Repeating the above procedure for the remaining integrals gives us the following result for the integral \eqref{eqn:integral}:
$$
  \sum^{\lambda \leq \rho_{k-1} \leq ...  \leq \rho_1 \leq \mu \text{ or}}_{\lambda \geq \rho_{1} \geq ...  \geq \rho_{k-1} \geq \mu}   \quad \frac {m(\chi_1,...,\chi_k) \prod_{i<j} \omega \left( \frac {\chi_i}{\chi_j} \right)}{\left(1-\frac {\chi_2q}{\chi_1} \right) ... \left(1-\frac {\chi_{k}q}{\chi_{k-1}} \right)}   \prod_{i=1}^k\frac {\Lambda(\pm \CW|^\pm_\mu, \chi^{\pm 1}_i q^\e)}{\chi_i^{r\e}\prod_{s \in S} \left(1 - \frac s{\chi_i} \right)^{\pm 1}}
$$
where $\chi_i$ is the weight of the square $\rho_{i-1} - \rho_{i}$. Since such a flag of partitions precisely determines a SYT, we conclude that \eqref{eqn:integral} implies: 
$$
\sum_{\lambda \vdash d} \frac {X^\pm_m \cdot [\lambda] }{g_\lambda}  \prod_{s\in S} \prod_{\square \in \lambda} \left(1- \frac {s}{\chi(\square)} \right) =  \sum_{\mu \vdash d+k} \frac {[\mu] }{g_\mu}  \prod_{s\in S} \prod_{\square \in \mu} \left(1- \frac { s}{\chi(\square)} \right)
$$

$$
 \sum_{\syt\text{ of shape } \pm \mu \mp \lambda}   \frac {m(\chi_1,...,\chi_k) \prod_{i<j} \omega \left( \frac {\chi_i}{\chi_j} \right)}{\left(1-\frac {\chi_2q}{\chi_1} \right) ... \left(1-\frac {\chi_kq}{\chi_{k-1}} \right)}   \prod_{i=1}^k\frac {\Lambda(\pm \CW|^\pm_\mu, \chi^{\pm 1}_i q^\e)}{\chi_i^{r\e}\prod_{s \in S} \left(1 - \frac s{\chi_i} \right)^{\pm 1}}
$$
Since the above relation must hold for all sets of variables $S$, this implies that they hold for each $\lambda$ individually after canceling all the terms that contain $s\in S$. 

\end{proof}

\textbf{} \\
Using \eqref{eqn:heis}, the above Proposition tells us how to compute the matrix coefficients of the operators $P_{\pm k, d}$ acting on $K$ in the basis of torus fixed points, for all $k\neq 0$. The matrix coefficients of $P_{0,\pm n} = p^\pm_n$ are simply the $\pm n-$th power sums of the weights of the boxes at the fixed point in question. \\

\section{Other Geometric Constructions}
\label{sec:extra}

\subsection{}
\label{sub:chare}

We will consider the vector bundle $E$ on $\CM_{d^-} \times \CM_{d^+}$, whose fibers are:
$$
E|_{\CF_-,\CF_+} = \Ext^1(\CF_+,\CF_-(-\infty))
$$
As a class in $K-$theory, it can be written as:
\begin{equation}
\label{eqn:tan}
[E] = p^{\mp*} \left([T \CM]\right) \pm q^{-\e} p^{\mp *} \left([\CW]^{\pm}\right) \cdot \left[ p^{+*}\left([\CT]\right)-p^{-*}\left([\CT] \right) \right]^{\mp}
\end{equation}
where $p^+,p^-:\CM \times \CM \longrightarrow \CM$ are the projections to the two factors. We will show how to prove the above formula in Section \ref{sub:proofe}. Note that by \eqref{eqn:ks}, we see that the restriction of $E$ to the diagonal $\Delta \subset \CM_d \times \CM_d$ is precisely the tangent space to $\CM_d$. Therefore, \eqref{eqn:tan} also gives us a formula for the character in these tangent spaces, and thus a formula for computing the constants $g_\lambda$ of \eqref{eqn:defg}. Consider the long exact sequence:
$$
\Hom(\CF_+,\CF_-) \longrightarrow \Hom(\CF_+,\CF_-|_\infty) \longrightarrow \Ext^1(\CF_+,\CF_-(-\infty))
$$ 
We have the tautological map $\CF_+ \longrightarrow \CF_+|_\infty \cong \CF_-|_\infty$, viewed as an element in the middle $\Hom$ space. Pushing this element to $\Ext^1$ gives us a section: 
\begin{equation}
\label{eqn:section}
s\in \Gamma(\CM_d, E)
\end{equation}
It is easy to see from the above exact sequence that this section vanishes precisely when $\CF_+ \subset \CF_-$. \\

\subsection{} 
\label{sub:defv}

Just like the moduli of flags of sheaves $\fZ^k$, many constructions consisting of nested sheaves are singular. The main exception is the variety:
\begin{equation}
\label{eqn:usl}
\fV^k = \{(\CF,\CF') \text{ such that }\CF \supset \CF' \supset \CF(-\nu)\} \subset \CM_d \times \CM_{d+k}
\end{equation}
where $\nu = \{y=0\}$ is a line in $\BP^2$. The above is a particular type of smooth moduli space of parabolic sheaves, and it is well-known to be Lagrangian inside the product of symplectic varieties $\CM_d \times \CM_{d+k}$. Alternatively, we will show in Section \ref{sub:proofv} that $\fV^k$ can be regarded as the fixed locus of a $\BZ/2\BZ-$action on the moduli space $\CM_{2d+k}$ and this will allow us to compute its tangent space:
\begin{equation}
\label{eqn:ressult}
[T \fV^k] = p^{\pm *} ([T \CM]) \mp q^{-\e} p^{\pm *}([\CW]^{\pm}) \cdot l^{\mp} - (1-q_1^{-1}) \cdot l \cdot l^\vee 
\end{equation}
Here, $q_1$ is the equivariant parameter in the direction of the line $\nu$, and $l = [\CL]$ is the $K-$theory class of the tautological rank $k$ vector bundle on $\fV^k$:
$$
\CL|_{\CF \supset \CF'} = \rg(\BP^2, \CF/\CF')
$$
We will show how to prove \eqref{eqn:ressult} in Section \ref{sub:proofv}. \\

\subsection{} These constructions give rise to functors on the derived categories of $\CM_d$:
\begin{equation}
\label{eqn:der}
\CC_\bullet \longrightarrow \CC_{\bullet \pm k},  \qquad c \longrightarrow Rp^\pm_{*}(K_s(E) \otimes p^{\mp *}(c))
\end{equation}

$$
\CC_\bullet \longrightarrow \CC_{\bullet\pm k}, \quad \qquad c \longrightarrow Rp^\pm_{*}(\CO_{\fV^k} \otimes p^{\mp *}(c))
$$
where $p^-, p^+: \CM \times \CM \longrightarrow \CM$ are the projections onto the first and second factors. Here, $K_s(E)$ denotes the Koszul complex of the vector bundle $E^\vee$ with respect to the section $s$ of \eqref{eqn:section}:
$$
[\Lambda^{\text{rk }E}E^\vee \longrightarrow ... \longrightarrow \Lambda^2 E^\vee \longrightarrow E^\vee \stackrel{s^\vee} \longrightarrow \CO_{\CM_{d^-} \times \CM_{d^+}}]
$$ 
At the level of $K-$theory, the above functors give rise to linear operators:
$$
a_k^\pm : K_\bullet \longrightarrow K_{\bullet \pm k}, \quad \ \qquad c \longrightarrow p^\pm_{*}(\Lambda(E,1) \cdot p^{\mp *}(c))
$$

\begin{equation}
\label{eqn:ex}
b_k^\pm: K_\bullet \longrightarrow K_{\bullet \pm k}, \qquad \ \qquad c \longrightarrow p^\pm_{*}(\CO_{\fV^k} \cdot p^{\mp *}(c))
\end{equation}
In the remainder of this paper, we will compute the above operators in terms of the shuffle algebra. \footnote{We have chosen the Koszul complex in \eqref{eqn:der} because it induces the class $\Lambda(E)$ in $K-$theory. As for the choice of the section $s$ with respect to which the complex is defined, this was chosen so that when $d^+=d^-+1$, \eqref{eqn:der} coincides with \eqref{eqn:derived} for $m=1$. Indeed, in that case, the section $s$ scheme-theoretically cuts out the correspondence $\fZ_{d,d+1}$} \\

\subsection{} 
\label{sub:compe}

The following elements of the shuffle algebra were defined in \cite{FHHSY}:
\begin{equation}
\label{eqn:formulaa}
A_k = \frac {(1-q)^k}{(1-q_1)^k(1-q_2)^k}  \prod_{1\leq i \neq j \leq k} \frac {(z_i-z_j)(z_i-qz_j)}{(z_i-q_1 z_j)(z_i-q_2 z_j)} \in \CA^+ 
\end{equation}

\begin{equation}
\label{eqn:formulab}
B_k = \frac {q_1^{\frac {k(k-1)}2}}{(1-q_1)^k} \prod_{1\leq i \neq j \leq k} \frac {z_i-z_j}{z_i-q_1 z_j} \in \CA^+
\end{equation}
As shown in \cite{FHHSY}, \cite{Shuf}, the above elements lie in the commutative subalgebra generated by $\{P_{1,0},P_{2,0},...\} \subset \CA^+$. It is also very easy to compute their coproduct, which was described in \cite{Shuf} and denoted therein by $\Delta_0$:
$$
\Delta_0(A_k) = \sum_{i=0}^k H_0^{k-i} A_i \otimes A_{k-i}, \qquad \Delta_0(B_k) = \sum_{i=1}^k H_0^{k-i} B_i \otimes B_{k-i}
$$
Elements with the above coproduct are called group-like, and they are always exponentials of the $q-$Heisenberg generators:
\begin{equation}
\label{eqn:vertex}
\sum_{k=0}^\infty A_kz^k = \exp\left(\sum_{k=1}^\infty \alpha_k \frac  { P_{k,0} z^k}k  \right), \qquad  \sum_{k=0}^\infty B_kz^k = \exp\left(\sum_{k=1}^\infty \beta_k  \frac  {P_{k,0} z^k}k  \right)
\end{equation}
where $\alpha_k, \beta_k \in \text{Frac}(\kk)$ are some constants. \\


\subsection{} To determine these constants, we will use the multiplicative linear map $\ph$ of \cite{Shuf} (we set $d=0$ in the notation of $\emph{loc. cit.}$):
$$
\ph: \langle P_{1,0},P_{2,0},... \rangle \longrightarrow \text{Frac}(\kk) 
$$

$$
\ph(R) = \left[ R(z_1,...,z_k) \prod_{1\leq i \neq j \leq k} \frac {z_i-q_1z_j}{z_i-z_j} \right]_{z_i = q_1^{-i}}  \frac {q_1^{-\frac {k^2}2+k}}{(1-q_2)^k} \prod_{i=1}^k \frac {q_1^{i-1}-q_2}{q_1^i-1}
$$
It is easy to see that:
$$
\ph(A_k) = \frac {q_1^{\frac k2}}{(1-q_1)^{k}(1-q_2)^{k}} \prod_{i=1}^k \frac {1-q_1^{i}q_2}{q_1^i-1}
$$




$$
\ph(B_k) = \frac {q_1^{\frac k2}}{(1-q_1)^{k}(1-q_2)^{k}} \prod_{i=1}^k \frac {q_1^{i-1} - q_2}{q_1^i-1}
$$
while: 
$$
\ph(P_{k,0}) =\frac {(-1)^k q_1^{\frac k2}(1-q_2^k)}{(1-q_1)^k(1-q_2)^k}
$$ 
was computed in 6.12 of \cite{Shuf} \footnote{The discrepancy between the above formula and \loccit is due to the fact that we have renormalized our $P_{k,0}$}. Since $\ph$ is multiplicative, plugging these identities in \eqref{eqn:vertex} gives us:
$$
\alpha_k = \frac {(1-q^k)}{(1-q_1^k)(1-q_2^k)}, \qquad \beta_k = \frac {(-1)^{k-1}}{1-q_1^k}
$$
so we conclude that:
$$
\sum_{k=0}^\infty A_kz^k = \exp \left(\sum_{k=1}^\infty \frac {(1-q^k)}{(1-q_1^k)(1-q_2^k)}\cdot  \frac {P_{k,0} z^k}k\right)
$$

$$
\sum_{k=0}^\infty B_kz^k = \exp \left(\sum_{k=1}^\infty \frac {(-1)^{k-1}}{1-q_1^k} \cdot \frac {P_{k,0} z^k}k \right)
$$

\subsection{} We will now show that the above shuffle elements act on $K$ via the geometric operators of \eqref{eqn:ex}: \\

\begin{proposition}
\label{prop:extra}

As endomorphisms of $K$, we have:
\begin{equation}
\label{eqn:boris1}
a^\pm_k = A^\pm_k(z_1,...,z_k)\cdot (z_1...z_k)^{r\e}
\end{equation}
\begin{equation}
\label{eqn:boris2}
b^\pm_k = q_1^{-\frac {k(k+1)}2} B^\pm_k(z_1,...,z_k) \cdot (z_1...z_k)^{r\e}
\end{equation}
where recall that $\e$ is $1$ or $0$ depending on whether the sign is $+$ or $-$. \\
\end{proposition}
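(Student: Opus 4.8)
The plan is to verify both identities after evaluating the two sides on the tautological classes $\Lambda_d^S$, which suffices by Proposition \ref{prop:gen}. On the geometric side I would compute $a_k^\pm(\Lambda_d^S)$ and $b_k^\pm(\Lambda_d^S)$ by equivariant localization. For $a_k^\pm$ the point is that $\Lambda(E,1)$ depends only on $[E]\in K$, which is given by \eqref{eqn:tan}; pushing forward along the projection $\CM_d\times\CM_{d\pm k}\to\CM_{d\pm k}$ and applying \eqref{eqn:local} on the fibre, the summand $p^{\mp *}[T\CM]$ of \eqref{eqn:tan} contributes exactly the factor that cancels the localization weight of the $\CM_d$-fibre, while the remaining summand $\pm q^{-\e}p^{\mp *}[\CW]^{\pm}\cdot(p^{+*}[\CT]-p^{-*}[\CT])^{\mp}$ is governed by the rank $k$ virtual class $\delta=p^{+*}[\CT]-p^{-*}[\CT]$. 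Feeding \eqref{eqn:gamma} and the elementary identity $\Lambda((1-q_1)(1-q_2)w,u)=\omega(u/w)$ into the $\Lambda$-class of this summand turns it into a product of factors $\omega$ and $\Lambda(\pm\CW_{d\pm k}^{\pm},\cdot)$, which is precisely the shape of the integrand of \eqref{eqn:formula}, with $\delta$ taking the place of the integration variables $u_1,\dots,u_k$ and the sum over fibre fixed points matching the residue prescription. For $b_k^\pm$ the same strategy is carried out by localizing on the smooth variety $\fV^k$: its $T$-fixed points are the pairs $(\lambda,\mu)$ for which $\mu/\lambda$ is a strip of $k$ boxes (at most one box in each column), the class $l=[\CL]$ restricts at such a point to $\sum_{\square\in\mu/\lambda}\chi(\square)$, and the tangent characters are read off from \eqref{eqn:ressult}. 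The summand $-(1-q_1^{-1})\,l\cdot l^\vee$ of \eqref{eqn:ressult} is responsible both for the strip condition and, through its diagonal part, for the vanishing linear factors whose removal is dictated by Remark \ref{rem:rem}.

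On the algebraic side I would compute the action of $A_k^\pm$ and $B_k^\pm$ on $\Lambda_d^S$ by substituting their explicit presentations \eqref{eqn:formulaa} and \eqref{eqn:formulab} into the formula of Theorem \ref{thm:act} and evaluating the normal-ordered integral according to the prescription of Subsection \ref{sub:normal}. The monomials $(z_1\dots z_k)^{r\e}$ in the statement are exactly what cancels the denominators $u_i^{r\e}$ in \eqref{eqn:formula}, while the constant $q_1^{-k(k+1)/2}$ combined with the $q_1^{k(k-1)/2}$ built into the definition of $B_k$ absorbs the extra $q_1$-weights coming from the $l\cdot l^\vee$ term of \eqref{eqn:ressult}. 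For $A_k$ the poles $u_i=q_1u_j$ and $u_i=q_2u_j$ produced by the denominator $\prod_{i\neq j}(z_i-q_1z_j)(z_i-q_2z_j)$ are matched, residue by residue, against the adjacencies among the boxes of the strips appearing on the geometric side, the numerator $\prod_{i\neq j}(z_i-z_j)$ killing the spurious residues at coincident Chern roots; for $B_k$ there is only the single family of poles $u_i=q_1u_j$, reflecting that $\mu/\lambda$ has at most one box per column. Comparing the two explicit expressions box by box then yields \eqref{eqn:boris1} and \eqref{eqn:boris2}.

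The step I expect to be the main obstacle is precisely this last matching: making the geometric localization output coincide with the algebraic residue computation requires careful bookkeeping of which residues are counted in the normal ordering of Subsection \ref{sub:normal}, of the orientation-dependent ordering of the contours in the $+$ and $-$ cases, and of all the scalar and monomial normalizations, including the vanishing-factor subtlety of Remark \ref{rem:rem}. A secondary difficulty is pinning down the $T$-fixed locus and the tangent characters of $\fV^k$ from \eqref{eqn:ressult}; here one may instead use the description of $\fV^k$ as the $\BZ/2\BZ$-fixed locus inside $\CM_{2d+k}$ to reduce the tangent-weight computation to the known weights on $\CM_{2d+k}$. As an alternative to the direct computation, once the geometric action of the Heisenberg subalgebra $\{P_{j,0}\}$ is available through \eqref{eqn:bar}, one could instead deduce \eqref{eqn:boris1}--\eqref{eqn:boris2} from the generating-function identities \eqref{eqn:vertex} by establishing the corresponding exponential identities for $\sum_k a_k^\pm z^k$ and $\sum_k b_k^\pm z^k$; but proving that exponential structure geometrically appears to require essentially the same residue analysis, so I would carry out the direct computation above.
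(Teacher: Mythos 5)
Your plan follows the paper's proof essentially step for step: both sides are evaluated on the classes $\Lambda_d^S$ by equivariant localization, the $K$-theory classes \eqref{eqn:tan} and \eqref{eqn:ressult} convert the fixed-point sums into products of $\omega$-type factors and $\Lambda(\pm\CW^\pm,\cdot)$, and these are matched residue by residue against the normal-ordered integrals for $A_k^\pm$ and $B_k^\pm$, with the same caveats about Remark \ref{rem:rem}, the contour ordering, and the use of the $\BZ/2\BZ$-fixed-locus description of $\fV^k$ for its tangent weights. The only point worth flagging is that the paper makes explicit why only nested pairs $\lambda_-\leq\lambda_+$ contribute to the $a_k^\pm$ sum (off the incidence locus $E$ contains a trivial character coming from the section $s$, so $\Lambda(E,1)$ vanishes), a vanishing your outline uses only implicitly when matching the fibre sum to the residue prescription.
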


\begin{proof} By the equivariant localization formula \eqref{eqn:local}, we have:
\begin{equation}
\label{eqn:ytu}
a^\pm_k \cdot \Lambda^S_d = \sum_{\lambda_+,\lambda_- \in \CM^T}  \frac {[\lambda_\pm]}{g_{\lambda^\pm}} \cdot  \frac{\Lambda(E_{\lambda_-,\lambda_+},1)}{\Lambda(T_{\lambda_{\mp}}\CM,1)} \cdot \Lambda^S_{\lambda^\mp}
\end{equation}
Let us remark that $E_{\lambda^-,\lambda^+}$ contains a trivial character 1, and so the above numerator vanishes, unless $\lambda_- \leq \lambda_+$. The reason for this is that the section $s\in \Gamma(E)$ vanishes on the locus $\CF^- \supset \CF^+$. Therefore, \eqref{eqn:ytu} becomes:
\begin{equation}
\label{eqn:ytu2}
a^\pm_k \cdot \Lambda^S_d = \sum_{\lambda_\pm \in \CM^T} \frac {[\lambda_\pm]\cdot \Lambda^S_{\lambda^\pm}}{g_{\lambda_{\pm}}} \sum_{\lambda_- \leq \lambda_+} \frac{\Lambda(E_{\lambda_-,\lambda_+})}{\Lambda(T_{\lambda_{\mp}}\CM)} \prod_{s\in S} \prod_{\square \in \lambda^+-\lambda^-}\left(1 - \frac s{\chi(\square)} \right)^{\mp 1}
\end{equation}
From \eqref{eqn:tan}, we infer that:
$$
[E_{\lambda_-,\lambda_+}] - [T_{\lambda^\mp}\CM] = \pm q^{-\e} \CW^\pm_{\lambda^{\mp}} \cdot ([\CT_{\lambda_+}]-[\CT_{\lambda_-}])^{\mp} = \pm q^{-\e} \CW^\pm_{\lambda^{\mp}}\cdot \sum_{\square \in \lambda_+-\lambda_-} \chi(\square)^{\mp 1} =
$$
\begin{equation}
\label{eqn:sheldon}
= \pm q^{-\e} \CW^\pm_{\lambda^{\pm }}\cdot \sum_{\square \in \lambda_+-\lambda_-} \chi(\square)^{\mp 1} + (1-q^{-1}_1)(1-q^{-1}_2)\sum_{\square, \square' \in \lambda_+-\lambda_-} \frac {\chi(\square)}{\chi(\square')} \qquad
\end{equation}
and therefore, \eqref{eqn:ytu2} becomes:
$$
 = \sum_{\lambda_\pm \in \CM^T} \frac {[\lambda_\pm]\cdot \Lambda^S_{\lambda^{\pm}}}{g_{\lambda_{\pm}}}  \cdot \sum_{\lambda_- \leq \lambda_+} \prod_{\square,\square' \in \lambda_+ - \lambda_-} \omega\left(\frac {\chi(\square)}{\chi(\square')} \right)  \prod_{\square \in \lambda_+-\lambda_-} \frac {\Lambda(\pm \CW^{\pm}_{\lambda^\pm},\chi(\square)^{\pm 1}q^\e)}{\prod_{s\in S}  \left(1-\frac s{\chi(\square)}\right)^{\pm 1}} 
$$
The above sum goes over all $\lambda_- \leq \lambda_+$ with $|\lambda_+| - |\lambda_-| = k$. If we fix the partition $\lambda_+$ (or $\lambda_-$), the sum goes over all the ways to remove (or add, respectively) $k$ non-ordered boxes $\square$ from this partition. We claim that the corresponding $\chi(\square)$ are precisely the poles of a rational function, in that the above relation becomes:
$$
a^\pm_k \cdot \Lambda^S_{\lambda^{\mp}} = \sum_{\lambda_\pm \in \CM^T} \frac {[\lambda_\pm]\cdot \Lambda^S_{\lambda^{\pm}}}{g_{\lambda^{\pm}}}    :\int: \prod_{1\leq i , j \leq k} \omega\left(\frac {u_i}{u_j} \right) \prod_{i=1}^k \left[\frac {\Lambda(\pm \CW^{\pm}_{\lambda^\pm}, u_i^{\pm 1}q^{\e})}{\prod_{s\in S} \left(1-\frac s{u_i} \right)^{\pm 1}}  Du_i \right]
$$
The reason we need to take the normal ordered integral of Section \ref{sub:normal} is that we must count each configuration of added/removed boxes exactly once. We need to remove the zeroes $u_i-u_i$ from the numerator of $\omega(u_i/u_i)$, since they precisely account for the residue computation (see Remark \ref{rem:rem}). Delocalizing the above, we see that:
$$
a^\pm_k \cdot \Lambda^S_d = \Lambda^S_{d\pm k} \left[ \frac {\pm(1-q)}{(1-q_1)(1-q_2)} \right]^k :\int: \prod_{1\leq i \neq j \leq k} \omega\left(\frac {u_i}{u_j} \right) \prod_{i=1}^k \left[\frac {\Lambda(\pm \CW^{\pm}, u_i^{\pm 1}q^{\e})}{\prod_{s\in S} \left(1-\frac s{u_i} \right)^{\pm 1}}  Du_i \right]
$$
Comparing this with the formulas for $A^\pm_k$ given by \eqref{eqn:formulaa} proves \eqref{eqn:boris1}. As for \eqref{eqn:boris2},  we need to play the same game:
\begin{equation}
\label{eqn:leonard}
b^\pm_k \cdot \Lambda^S_d = \sum_{\lambda_- \leq \lambda_+}  \frac {[\lambda_\pm]}{\Lambda(T_{(\lambda_-,\lambda_+)}\fV^k,1)} \prod_{s\in S} \Lambda(\CT_{\lambda^{\mp}},s)
\end{equation}
We can use relation \eqref{eqn:ressult} to compute:
$$
[T_{\lambda^\pm}\CM] - [T_{(\lambda^-,\lambda^+)} \fV^k]  = \pm q^{-\e} [\CW^{\pm}_{\lambda^\pm}] \sum_{\square \in \lambda_+ - \lambda_-} \chi(\square)^{\mp} + (1-q^{-1}_1) \sum_{\square, \square' \in \lambda_+\backslash \lambda_-} \frac {\chi(\square)}{\chi(\square')}
$$
This formula differs from \eqref{eqn:sheldon} only in the coefficient in front of the last term, which is $1-q^{-1}_1$ instead of $(1-q_1^{-1})(1-q^{-1}_2)$. Therefore, the whole discussion that applied to $a_k^\pm$ allows to write \eqref{eqn:leonard} as the normal ordered integral:
$$
b^\pm_k \cdot \Lambda^S_d = \Lambda^S_{d\pm k} \left[ \frac {\pm 1}{1-q_1} \right]^k :\int: \prod_{1 \leq i\neq j \leq k}  \frac {u_i-u_j}{u_iq_1-u_j}  \prod_{i=1}^k \left[\frac {\Lambda(\pm \CW^{\pm}, u_i^{\pm 1}q^{\e})}{\prod_{s\in S} \left(1-\frac s{u_i} \right)^{\pm 1}}  Du_i \right]
$$
Comparing this with the formulas for $B_k^\pm$ given by \eqref{eqn:formulab} proves \eqref{eqn:boris2}. 

\end{proof}

\subsection{}
\label{sub:proofe}

Formula \eqref{eqn:tan} follows from the Proposition below: \\


\begin{proposition}

We have the following equality of $K-$theory classes:
\begin{equation}
\label{eqn:master}
[E] = \sum_{i=1}^r t_i \cdot  [\CT_-] + \sum_{i=1}^r q^{-1}t_i^{-1} \cdot [\CT_+]^\vee - (1-q_1^{-1})(1-q_2^{-1}) \cdot [\CT_-] \cdot [\CT_+]^\vee \qquad
\end{equation}
where $\CT_-$ and $\CT_+$ denote the tautological bundles on the first and second, respectively, factors of $\CM \times \CM$. \\

\end{proposition}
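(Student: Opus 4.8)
The plan is to reduce \eqref{eqn:master} to the computation of a relative $K$-theoretic Euler characteristic on $\BP^2$, and then to evaluate that Euler characteristic with the family version of the ADHM monad, which globalizes the complex $\CW$ of \eqref{eqn:complex}. The first step is to check that $\RHom_{\BP^2}(\CF_+,\CF_-(-\infty))$ has cohomology only in degree $1$. For $\Ext^0$: restricting a map $\CF_+\to\CF_-(-\infty)$ to the line $\infty$ gives an element of $\Hom_\infty(\CO_\infty^{\oplus r},\CO_\infty(-1)^{\oplus r})=0$, so the map factors through $\CF_-(-2\infty)$; iterating and using torsion-freeness forces it to vanish. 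Serre duality identifies $\Ext^2_{\BP^2}(\CF_+,\CF_-(-\infty))$ with $\Hom_{\BP^2}(\CF_-,\CF_+(-2\infty))^\vee$, which vanishes for the same reason. Hence $E$ is a vector bundle (cohomology and base change) and, since $\CF_+$ is perfect on the smooth surface $\BP^2$, in $K_T(\CM_{d^-}\times\CM_{d^+})$ we have
$$
[E]\;=\;-\,\pi_{*}\!\left([\CS_+]^\vee\cdot[\CS_-]\cdot[\CO_{\BP^2}(-\infty)]\right),
$$
where $\pi:\BP^2\times\CM_{d^-}\times\CM_{d^+}\to\CM_{d^-}\times\CM_{d^+}$, the $\CS_\pm$ are the pullbacks of the universal sheaves, and $(\,\cdot\,)^\vee$ is the $K$-theory dual.

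The second step is to express $[\CS]$ globally on $\BP^2\times\CM_d$. The universal sheaf has a three-term locally free resolution, with differentials the homogenizations of \eqref{eqn:maps}, whose restriction to $\BP^2\times[0:0:1]$ is $\CW$ of \eqref{eqn:complex}; it is obtained from $\CW$ by twisting the summands sitting in cohomological degrees $-1,0,1$ by $\CO_{\BP^2}(-\infty)$, $\CO_{\BP^2}$, $\CO_{\BP^2}(\infty)$ respectively. Writing $L:=[\CO_{\BP^2}(-\infty)]$, normalized so that $L=1$ at the fixed point $[0:0:1]\notin\infty$, the global form of \eqref{eqn:gamma} is
$$
[\CS]\;=\;\sum_{k=1}^{r}t_k^{-1}\;-\;(1-q_1 L)(1-q_2 L)\,L^{-1}\,[\CT]\qquad\textrm{in }K_T(\BP^2\times\CM_d),
$$
which reduces to \eqref{eqn:gamma} at $L=1$; the exact powers of $L$ (equivalently, the overall $\CO_{\BP^2}$-twist of the family monad) are pinned down by restriction to a second $T$-fixed point of $\BP^2$ or by matching $c_2(\CS)=-d$. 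Substituting the dual of this expression for $[\CS_+]$ and this expression for $[\CS_-]$, multiplying by $L$, and expanding leaves a sum of terms of the shape (tautological class built from $\CT_-,\CT_+$)$\,\cdot\, L^{a}$ with $a\in\{-1,0,1,2\}$.

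The third step is to push forward: $\pi_*(L^a)=\chi_{\BP^2}(\CO_{\BP^2}(-a))$, so $\pi_*(L)=\pi_*(L^2)=0$, $\pi_*(1)=1$ and $\pi_*(L^{-1})=[H^0(\BP^2,\CO_{\BP^2}(1))]$. Collecting terms, the contributions proportional to $\sum_i t_i$, to $\sum_i t_i^{-1}$ and to $[\CT_-][\CT_+]^\vee$ collapse respectively to $\sum_i t_i\,[\CT_-]$, to $q^{-1}\sum_i t_i^{-1}\,[\CT_+]^\vee$ and to $-(1-q_1^{-1})(1-q_2^{-1})[\CT_-][\CT_+]^\vee$, which is \eqref{eqn:master}. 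As consistency checks, restricting \eqref{eqn:master} to the diagonal recovers the ADHM expression for $[T\CM_d]$ through \eqref{eqn:ks} (this is also what makes \eqref{eqn:master} equivalent to \eqref{eqn:tan}), and \eqref{eqn:master} can alternatively be verified one torus fixed point at a time, where $[E]|_{(\CI_\lambda,\CI_\mu)}$ is an explicit $\Ext$-character between the monomial ideals $\CI_\lambda,\CI_\mu$.

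The step I expect to be the main obstacle is none of the above in outline, but rather the equivariant weight bookkeeping inside it: one must correctly track the torus weights carried by $\CO_{\BP^2}(\pm\infty)$, by the sections $H^\bullet(\BP^2,\CO_{\BP^2}(k))$, and by the monad differentials --- i.e.\ fix the overall twist of the family monad unambiguously --- so that the three collapses in the last step come out with exactly the factors $q^{-1}$ and $(1-q_1^{-1})(1-q_2^{-1})$ of \eqref{eqn:master} rather than a $q$-shift of them. The remaining ingredients --- vanishing of $\Ext^0$ and $\Ext^2$, existence of the family monad, and cohomology of line bundles on $\BP^2$ --- are standard.
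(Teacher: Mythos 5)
Your proposal is correct, but it takes a genuinely different route from the paper. The paper's proof localizes: after the same vanishing of $\Ext^0$ and $\Ext^2$ (the former from the twist by $\infty$, the latter from Serre duality), it writes $[E_{\lambda,\mu}]$ as a sum of characters of $\Ext^1(\CI_{\mu^i},\CI_{\lambda^{i'}}(-\infty))$ between monomial ideals, quotes the explicit character formula from Lemma 4.14 of \cite{FFNR}, recognizes the answer as the fixed-point restriction of the right-hand side of \eqref{eqn:master} via \eqref{eqn:aaron}, and delocalizes. You instead work globally: you write $[E]=-\pi_*([\CS_+]^\vee[\CS_-]L)$ and evaluate the pushforward using the family ADHM monad and the equivariant Euler characteristics $\chi(\BP^2,\CO(n))$. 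I checked your computation: with $L$ trivialized at $[0:0:1]$ so that $[\CS]=\sum_k t_k^{-1}-(L^{-1}-q_1-q_2+qL)[\CT]$ and with $H^0(\CO(1))$ carrying character $1+q_1+q_2$, the cross terms collapse exactly as you claim (using $\pi_*(L)=\pi_*(L^2)=0$, $\pi_*(L^{-1})=1+q_1+q_2$, $\pi_*(L^3)=q^{-1}$), giving $1-q_1^{-1}-q_2^{-1}+q^{-1}=(1-q_1^{-1})(1-q_2^{-1})$ in the quadratic term. You correctly identify the only delicate point, namely the equivariant normalization of the monad twists; with the opposite sign convention on the weights of $x,y$ the final coefficient would not factor correctly, so this normalization genuinely must be pinned down (e.g.\ by your diagonal consistency check against \eqref{eqn:ks}). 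What your approach buys is self-containedness --- it effectively reproves the \cite{FFNR} character formula rather than citing it --- at the cost of needing the family monad and the $\CO(\pm\infty)$ bookkeeping; the paper's approach is shorter because it outsources the combinatorics. Your closing remark that the identity can alternatively be verified fixed point by fixed point is precisely the paper's actual proof.
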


\begin{proof} By the localization formula \eqref{eqn:local}, it is enough to prove that both sides of \eqref{eqn:master} have the same restriction to all torus fixed points. We have:
$$
[E_{\lambda,\mu}] = \sum_{i=1}^r \sum_{i'=1}^r \frac {t_i}{t_{i'}} \cdot [\Ext^1(\CI_{\mu^{i}}, \CI_{\lambda^{i'}}(-\infty))] 
$$
where the last equality holds because $\Ext^0$ and $\Ext^2$ vanish, the former because of the twist by $\infty$, and the latter because of Serre duality. The characters in the above $\Ext^1$ spaces have been computed, for example, in Lemma 4.14 of \cite{FFNR}:
$$
[E_{\lambda,\mu}] =  \sum_{i=1}^r \sum_{i'=1}^r \frac {t_{i}}{t_{i'}} \left(\sum_{j'\geq 0} \frac {q_1^{\lambda^{i'}_{j'}}-1}{q_1-1} q_2^{j'} + q^{-1} \sum_{j\geq 0} \frac {q_1^{-\mu^{i}_{j}}-1}{q^{-1}_1-1} q_2^{-j} - \right.
$$
$$
\left. - (1-q_1^{-1})(1-q_2^{-1})\sum_{j\geq 0} \sum_{j'\geq 0} \frac {(q_1^{\lambda^{i'}_{j'}}-1)(q_1^{-\mu^i_j}-1)}{(q_1-1)(q_1^{-1}-1)} q_2^{j'-j} \right)
$$
Comparing this with the character in the tautological sheaves from \eqref{eqn:aaron} gives us:
$$
[E_{\lambda,\mu}] = \sum_{i=1}^r t_i \cdot [\CT_\lambda] + \sum_{i=1}^r q^{-1} t^{-1}_i \cdot [\CT_\mu]^\vee - (1-q_1^{-1})(1-q^{-1}_2) \cdot [\CT_\lambda]\cdot [\CT_\mu]^\vee
$$
Delocalizing the above relation gives us precisely \eqref{eqn:master}.



\end{proof}

\subsection{}
\label{sub:proofv}

Let us now look at the varieties $\fV^k$ of Subsection \ref{sub:defv}, and show that they are smooth and compute the character \eqref{eqn:ressult} in their tangent spaces. We can interpret a flag of two sheaves \eqref{eqn:usl} as a single sheaf:
\begin{equation}
\label{eqn:victor}
\left(\CF \supset \CF' \supset \CF(-\nu) \right) \qquad \leftrightarrow \qquad \tCF = \tau^*(\CF')+\tau^*(\CF)(-\nu)
\end{equation}
where: 
$$
\tau:\BC^2 \longrightarrow \BC^2, \qquad \tau(x,y)=(x,y^2)
$$
This makes $\tCF$ into a sheaf on $\BC^2$, but we can glue a trivial sheaf at $\infty$ to make it into a trivialized sheaf on $\BP^2$. Moreover, $\tCF$ is $\BZ/2\BZ-$invariant under the action of $\BZ/2\BZ$ on $\BP^2$ given by $[x:y:z] \longrightarrow [x;-y;z]$. Therefore, we have a map:
$$
\fV_{d,d+k} \longrightarrow \CM_{2d+k}^{\BZ/2\BZ}
$$
This map is an isomorphism onto a certain connected component of the $\BZ/2\BZ-$fixed locus on the right. This allows us to compute the tangent spaces to $\fV^k$ as the $\BZ/2\BZ-$fixed loci of the tangent spaces to $\CM$:
\begin{equation}
\label{eqn:inc}
[T_{\CF \supset \CF'}\fV^k] = \BZ/2\BZ-\text{invariant part of } [T_{\tCF} \CM]
\end{equation}
The $K-$theory class of $[T_{\tCF} \CM]$ is given by restricting \eqref{eqn:master} to the diagonal of $\CM$:
\begin{equation}
\label{eqn:tanmod}
[T\CM] = \sum_{i=1}^r t_i \cdot  [\tCT] + \sum_{i=1}^r q^{-1}t_i^{-1} \cdot [\tCT]^\vee - (1-q_1^{-1})(1-q_2^{-1}) \cdot [\tCT] \cdot [\tCT]^\vee \qquad
\end{equation}
where $\tCT$ denotes the tautological bundle on $\CM$. Under the inclusion \eqref{eqn:inc}, it is related to the tautological bundles on $\fV^k$ by: $[\tCT] = [\CT']+q_2[\CT]$. Therefore, \eqref{eqn:inc} and \eqref{eqn:tanmod} imply:
$$
[T \fV^k] = \BZ/2\BZ-\text{invariant part of } \sum_{i=1}^r t_i \cdot  \left([\CT'] + q_2[\CT]\right) + 
$$
$$
+\sum_{i=1}^r q^{-1}t_i^{-1} \cdot \left([\CT']^\vee + q_2^{-1} [\CT]^\vee \right) - (1-q_1^{-1})(1-q_2^{-1})  \left([\CT']+q_2[\CT]\right)  \left([\CT']^\vee+q^{-1}_2[\CT]^\vee\right)
$$
Taking the $\BZ/2\BZ-$invariant means only keeping those terms which contain $q_2^{2k}$ for some integer $k$, and replacing that with $q_2^k$. Therefore, the above relation gives us:
\begin{equation}
\label{eqn:tanlag}
[T \fV^k] = \sum_{i=1}^r t_i \cdot [\CT'] + \sum_{i=1}^r q^{-1}t_i^{-1} \cdot [\CT]^\vee   -
\end{equation}
$$
- (1-q_1^{-1})\left([\CT']\cdot [\CT']^\vee+ [\CT]\cdot [\CT]^\vee - [\CT]\cdot [\CT']^\vee - q_2^{-1}[\CT']\cdot [\CT]^\vee \right)
$$
Together with \eqref{eqn:tanmod}, formula \eqref{eqn:tanlag} implies \eqref{eqn:ressult}. 








\end{document}